\theoremstyle{plain}
\newtheorem{theorem}{Theorem}[section]
\newtheorem{corollary}[theorem]{Corollary}
\newtheorem{lemma}[theorem]{Lemma}
\theoremstyle{definition}
\newtheorem{remark}[theorem]{Remark}
\newtheorem{definition}[theorem]{Definition}
\theoremstyle{definition}
\begin{document}
   \title[The stick number of rail arcs]{The stick number of rail arcs}
   \author{Nicholas Cazet}
   
   \begin{abstract}

Consider two parallel lines $\ell_1$ and $\ell_2$ in $\mathbb{R}^3$. A rail arc is an embedding of an arc in $\mathbb{R}^3$ such that one endpoint is on $\ell_1$, the other is on $\ell_2$, and its interior is disjoint from $\ell_1\cup\ell_2$. Rail arcs are considered up to rail isotopies, ambient isotopies of $\mathbb{R}^3$ with each self-homeomorphism mapping $\ell_1$ and $\ell_2$ onto themselves. When the manifolds and maps are taken in the piecewise linear category, these rail arcs are called stick rail arcs. 

The stick number of a rail arc class is the minimum number of sticks, line segments in a p.l. arc, needed to create a representative. This paper calculates the stick numbers of rail arcs classes with a crossing number at most 2 and uses a winding number invariant to calculate the stick numbers of infinitely many rail arc classes.

 Each rail arc class has two canonically associated knot classes, its under and over companions. This paper also introduces the rail stick number of knot classes, the minimum number of sticks needed to create a rail arcs whose under or over companion is the knot class. The rail stick number is calculated for 29 knot classes with crossing number at most 9. The stick number of multi-component rail arcs classes is considered as well as the lattice stick number of rail arcs. 

 \end{abstract}

\maketitle

\section{Introduction}

Manifolds and maps are smooth or piecewise linear depending on the context.   Knotoids were introduced by Turaev in \cite{turaev2012knotoids}.

\begin{definition} A {\it planar knotoid diagram} $\kappa\subset\mathbb{R}^2$ is a generic immersion of the unit interval with crossing information given at each double point. Furthermore, the image of 0 and 1 are distinct and called the {\it tail} and {\it head} of $\kappa$. A {\it trivial} planar knotoid diagram is an embedding of the unit interval. 

\end{definition}

The three Reidemeister moves are defined on planar knotoid diagrams away from the head and tail, the supporting disk of the move is disjoint from either endpoint.

\begin{definition}

Consider the equivalence relation on planar diagrams generated by Reidemeister moves and planar isotopy.  The equivalence classes of this relation are called {\it planar knotoids}.

\end{definition}

 \noindent In general, knotoids can be defined on any orientable surface $\Sigma$ with knotoids in $\Sigma=S^2$ and $\Sigma=\mathbb{R}^2$ referred to as {\it classical}.  
 
 Planar knotoids are in correspondence with rail arc classes.

 \begin{definition} For two parallel lines $\ell_1$ and $\ell_2$ in $\mathbb{R}^3$ called {\it rails}, a {\it rail arc} is an embedding of a simple arc $r$ with one endpoint on $\ell_1$, the other on $\ell_2$, and an interior disjoint from $\ell_1\cup\ell_2$. Two rails arcs $r_1$ and $r_2$ are {\it rail isotopic} if there is an ambient isotopy of $\mathbb{R}^3$ taking $r_1$ to $r_2$ while mapping the rails onto themselves throughout the isotopy. A {\it rail arc class} will mean a rail isotopy class of rail arcs. \end{definition}
 
 \noindent Generically projecting a rail arc onto a plane perpendicular to its rails gives a planar knotoid diagram, and  planar knotoid diagrams lift to rail arcs as knot diagrams lift to knots in 3-space, see Figure \ref{fig:fig8}.

 \begin{theorem}[G\"ug\"umc\"u and Kauffman '17 \cite{gugumcu2017new}] Two rail arcs are rail isotopic if and only if their projections to planar knotoid diagrams represent the same planar knotoid.

 \end{theorem}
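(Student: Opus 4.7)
The plan is to mimic the proof of the classical Reidemeister theorem relating knot isotopy in $\mathbb{R}^3$ to diagrammatic equivalence, with extra care for the role of the rails.

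For the reverse direction (knotoid-equivalent diagrams lift to rail-isotopic rail arcs), it suffices to realize each generator of the equivalence relation — planar isotopy, or a single Reidemeister move supported in a disk disjoint from the head and tail — by a rail isotopy. A Reidemeister move is supported in a small ball in $\mathbb{R}^3$ whose projection is the supporting disk; since the disk misses the two rail projection points, the ball misses $\ell_1\cup\ell_2$, and the move extends trivially to a compactly supported ambient isotopy that is the identity on the rails. A planar isotopy can be decomposed into one that fixes small neighborhoods of the two rail projection points plus one that slides those points to new positions. The first type lifts to a rail isotopy that is the identity near $\ell_1\cup\ell_2$; the second type is realized by a rail isotopy that slides the endpoints of the arc along their respective rails (and produces a diagram with the endpoints repositioned accordingly).

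For the forward direction, start with a rail isotopy $H:\mathbb{R}^3\times[0,1]\to\mathbb{R}^3$ with each $H_t$ preserving $\ell_1\cup\ell_2$ setwise. First perturb $H$ through rail isotopies so that the family of projections $\pi\circ H_t(r_1)$ is a generic $1$-parameter family of immersed arcs: it is a knotoid diagram for all but finitely many $t$, at which a single cusp, tangency, or transverse triple point appears. These codimension-one events correspond exactly to a planar isotopy passage or one of the three Reidemeister moves on the diagram. The rail condition rules out any such event involving the head or tail: the endpoints remain on the rails, so their projections stay at the two fixed rail projection points, and the interior of the arc is always disjoint from $\ell_1\cup\ell_2$, so no strand may pass through an endpoint. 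Between the finitely many critical times, $\pi\circ H_t(r_1)$ is a planar isotopy of knotoid diagrams, and concatenating everything yields a finite sequence of Reidemeister moves and planar isotopies.

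The main obstacle is the transversality step in the forward direction: one must check that within the (infinite-dimensional) space of rail-preserving ambient isotopies, a generic one projects to a generic $1$-parameter family of planar arc diagrams, with only codimension-one singularities appearing and all of them occurring away from the head and tail. This is the standard jet-transversality argument, but the rail-preserving constraint must be verified not to obstruct the transversality. Everything else follows from the classical Reidemeister-style analysis of codimension-one singularities of projections, applied here relative to the two marked rails.
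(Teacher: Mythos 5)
The paper offers no proof of this statement: it is quoted as a theorem of G\"ug\"umc\"u and Kauffman and used as a black box, so there is no internal argument to compare yours against. Your sketch follows the standard Reidemeister-theorem template, which is also the strategy of the cited source, and the key observation in your forward direction is the right one: because the projection is taken along the direction of the rails, the preimage of each endpoint of the diagram is exactly a rail, so the interior of the arc (which avoids the rails at every time of the isotopy) can never project through the head or tail, and all codimension-one events of a generic family are honest Reidemeister moves supported away from the endpoints.

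Two points need repair before this is a proof. First, in the reverse direction your treatment of planar isotopies that move the endpoints is wrong: sliding an endpoint of the arc along its rail moves it in the projection direction, so it does not move the projected endpoint at all and cannot realize a planar isotopy that repositions the head or tail in the plane. Since a rail isotopy must carry the rails onto themselves throughout, every diagram obtained by projection has its head and tail pinned at the two points to which the rails project; what you actually need is that any chain of Reidemeister moves and planar isotopies joining two such pinned diagrams can be replaced by one whose planar isotopies fix those two points throughout (or, failing that, that the residual contribution of braiding the two endpoints around one another is still realizable by a rail isotopy). Second, you explicitly leave the jet-transversality step of the forward direction --- that a generic rail-preserving ambient isotopy projects to a generic one-parameter family of diagrams --- as an acknowledged obstacle rather than carrying it out; that step is the technical heart of the theorem, so as written the proposal is an outline rather than a complete argument.
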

 
 \noindent This theorem establishes a one-to-one correspondence between rail arc classes and planar knotoids.
 
 Planar knotoid diagrams are nearly knot diagrams.

\begin{definition} Connect the endpoints of a planar knotoid diagram $\kappa$ with another generic immersion of the unit interval $\alpha$ that meets $\kappa$ transversely. A knot diagram is created with the addition of crossing information at the intersection of $\alpha$ and $\kappa$. If $\alpha$ is taken to be under whenever it intersects $\kappa$, then the resulting knot class that the diagram represent is called the {\it under companion} of $\kappa$. Likewise, if $\alpha$ is taken to be over whenever it intersects $\kappa$, then the resulting knot class is called the {\it over companion} of $\kappa$. 

\end{definition}

\noindent The under and over companions of a planar knotoid diagram do not depend on the choice of $\alpha$, as long as all of its crossings with $\kappa$ are all under or all over; $\alpha$ is referred to as an {\it under pass} or {\it over pass} in these cases. Equivalent planar knotoid diagrams have equivalent under and over companions. Therefore, each planar knotoid has a well-defined under and over companion.

Another diagrammatic way of approaching rail arcs was introduced by Kodokostas and Lambropoulou in \cite{kodokostas2019rail}.

\begin{definition} Projecting a rail arc onto a plane containing the rails gives a {\it planar rail knotoid diagram}. The {\it under companion} of a planar rail knotoid diagram is the knot diagram represented by connecting the tail with arc than runs down the first rail, with crossing information inherited from the rail, until a perpendicular arc can be added to connect with the second rail and a third arc is added that connects the second arc to the head along the second, rail with crossing information inherited from the rail. An {\it over companion} is defined likewise with arcs running up the rails. \end{definition}

\noindent The under and over companions of a planar rail knotoid diagram  are the under and over companions of the planar knotoid diagram when the rail arc is projected onto a plane perpendicular to its rails.   See Figure \ref{fig:fig8} for the relationship between rail arcs, their planar rail knotoid diagrams, and their planar knotoid diagrams. There are several local moves, including the Reidemeister moves, that generate an equivalence relation on planar rail knotoid diagrams such that two diagrams are equivalent if and only if their associated rail arcs are rail isotopic, see Figure 8 of  \cite{kodokostas2019rail}.

\begin{figure}[h]

\includegraphics[scale=.58]{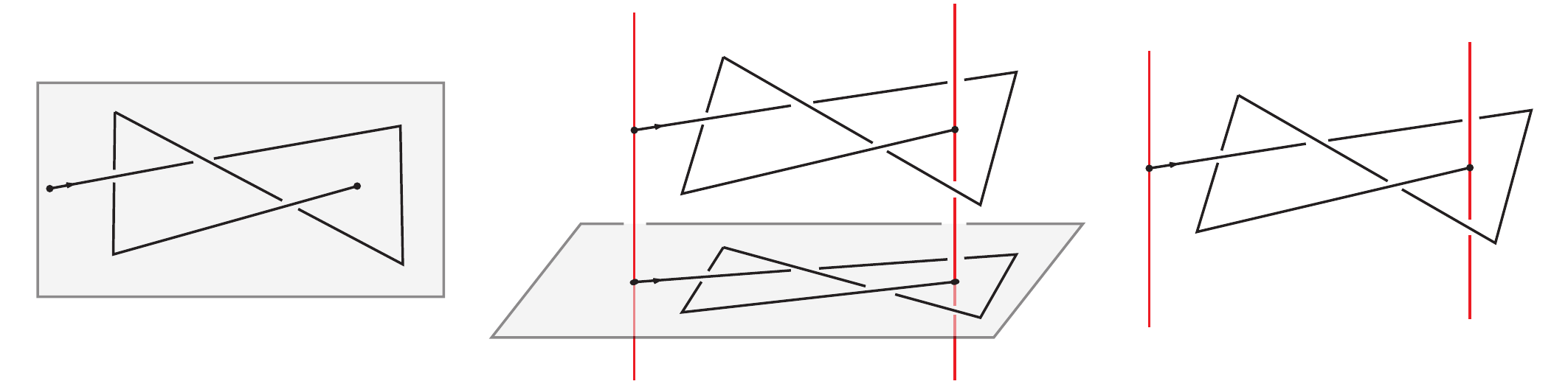}

\label{fig:fig8}

\caption{A stick rail arc with its planar knotoid projection on the left and rail knotoid projection on the right.}
\end{figure}

The stick number of stick knots motivates this paper's study of rail arcs.

\begin{definition} A {\it stick knot} is a p.l. knot in $\mathbb{R}^3$ seen as a finite union of line segments called {\it sticks}. Stick knots are equivalent if there is a p.l. isotopy of $\mathbb{R}^3$ taking one to the other. Equivalent stick knots differ by a sequence of triangle moves.
\end{definition}

 \begin{definition} The {\it stick number} $s[K]$ of a p.l. knot class $[K]$ is the minimum number of sticks needed to construct a representative.
 \end{definition}

\noindent As with most min/max knot invariants, the stick number is difficult to compute.  
  
Jin and Kim \cite{jin}, Negami \cite{Negami1987}, and Randell \cite{randell} showed that $s[3_1]=6$, $s[4_1]=7$. Randell additionally showed that every other non-trivial knot class must have a stick number at least 8. Negami \cite{Negami1987}, Huh and Oh \cite{upper}, and Calvo \cite{calvo} bounded the stick number of a knot class by its crossing number. Adams et al. studied the stick number of knots and links under various restrictions concerning the length of the sticks, the angles between sticks, and placements of the vertices \cite{Adams1997}. They also gave the stick numbers of infinitely many knots, the $(p,p-1)$-torus knots. Huh, Oh, and  No provided upper bounds on the stick number of 2-bridge knots \cite{2bridge}. A summary of the key results in this area, as well as a table giving the best known bounds on the stick numbers of knots with at most 10 crossings, is found in \cite{eddy}. For example,  see      \cites{spherical,blair,ramsey,spatial,equilateral}.

The stick number is also considered for stick knots confined to the simple cubic lattice.
  
  \begin{definition} Stick knots constrained to the {\it simple cubic lattice} $\mathbb{L}^3=\mathbb{R}\times\mathbb{Z}\times\mathbb{Z}\cup\mathbb{Z}\times\mathbb{R}\times\mathbb{Z}\cup\mathbb{Z}\times\mathbb{Z}\times\mathbb{R}$ are called {\it lattice knots}. Lattice knots are a subclass of p.l. knots such that two lattice knots are equivalent if there is a p.l. isotopy taking one to the other. They admit a stick number called the {\it lattice stick number}, denoted $s_{CL}[K]$, where the minimum is taken over all lattice representatives of the class $[K]$ . \end{definition}

 \noindent Diao \cite{diao} proved $s_{CL}[3_1]=12$, Promislow and Rensburged \cite{rens} proved that $s_{CL}[9_{47}]=18$, and Huh and Oh 
  proved that $s_{CL}[4_1]=14$ in \cite{latticehuh}. Huh and Oh proved  that $3_1$ and $4_1$ are the only knots with a lattice stick number less than 15 \cite{Huh_2010}. Adams et al. proved that $s_{CL}[8_{20}]=s_{CL}[8_{21}]=s_{CL}[9_{46}]=18$ and $s_{CL}[\ \text{$(p,p+1)$-torus knot}\ ]=6p$ \cite{adams}. They also gave results on minimal lattice stick conformation of knot sums, satellite knots, and links.

With the previously cited interest in the stick number of knots, it is natural to consider the stick number of rail arcs.

\begin{definition}
A {\it stick rail arc} is a rail arc taken in the p.l. category, i.e. a rail arc that is a finite union of line segments called {\it sticks}. Two stick rail arcs are equivalent if there is p.l. rail isotopy taking one to the other. This implies that equivalent stick rail arcs differ by a sequence of triangle moves. 

\end{definition}

\begin{definition}

The {\it stick number} of a rail arc class is the minimum number of sticks needed to created a p.l. representative. The stick number of a rail arc class $[r]$ is denoted $s[r].$

\end{definition}

\begin{definition}
A {\it lattice rail arc} is a stick rail arc whose arc and rails are constrained to the simple cubic lattice.  The minimum number of stick needed to create a lattice representatives of a rail arc class $[r]$ is its {\it lattice stick number} and is denoted $s_{CL}[r]$.

\end{definition}

Stick knots are used in the study of DNA and ringed polymers, see \cite{grid} \cite{deguchi}, \cite{polygons}, \cite{Janse_van_Rensburg_2012}. DNA topologists are interested in the minimal length of lattice representatives \cite{Scharein_2009}. Planar knotoids and rail arcs are being researched in the context of open-knotted protein chains, see  \cite{barbensi2021f},  \cite{dorier2018knoto}, \cite{goundaroulis2017studies}, \cite{polym}, \cite{gugumcu2017knotoids}. Altogether, this suggests the practicality of studying the stick number of rail arc classes. This paper is concerned with the theoretical interest of finding the minimum number of sticks needed to create a representative of a rail arc class. 

Theorems \ref{thm:lower} and \ref{thm:4} establish that every non-trivial rail arc class requires at least 4 sticks to construct a representative and there are only three rail arc classes with a stick number at most four.  Section \ref{stickindex} calculates the stick numbers of all rail arc classes with a crossing number at most two. In doing so, a winding invariant is defined that is also used to calculate the stick numbers of an infinite family. Section \ref{railindex} relates the stick number of rail arc classes and the stick number of knot classes by defining the {\it rail stick number}. This is the minimum number of sticks among all stick rail arcs whose under or over companion is a given knot. Theorem \ref{thm:knots} calculates the rail stick number of 29 knot classes with a crossing number at most 9.  To conclude, Section \ref{cubic} provides a few results on the lattice stick number of rail arcs and the rail lattice stick number of knots and links.

\section{Planar Knotoid / Rail Arc Classification}

The {\it crossing number} of a rail arc class is the minimal number of crossings among all representative planar knotoid diagrams. A planar knotoid diagram is {\it normal} if its tail is adjacent to the infinite region.  This section discusses the classification of prime rail arc classes with few crossings. Products of knotoids is defined in \cite{turaev2012knotoids}. Suppose that $\kappa_1$ and $\kappa_2$ are planar knotoid diagrams such that $\kappa_2$ is normal. Concatenating $\kappa_1$ at its head with a copy of $\kappa_2$ at its tail in a small neighborhood of the former's head produces the {\it product planar knotoid diagram $\kappa_1\kappa_2$,} see Figure \ref{fig:vertexproduct}. The {\it product planar knotoid} $[\kappa_1][\kappa_2]=[\kappa_1\kappa_2]$ is independent of representative used in the product thus is well-defined.

\begin{figure}[h]

\begin{overpic}[unit=.434mm,scale=.9]{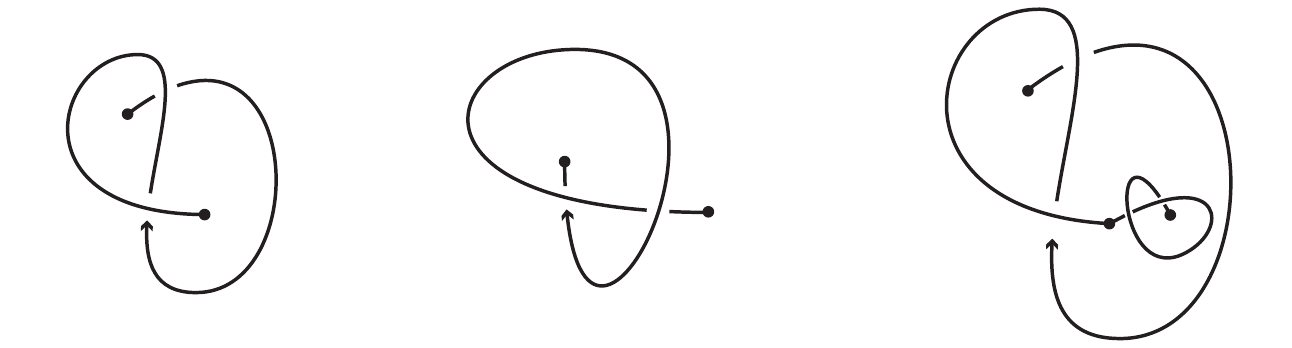}\put(34,0){$\kappa_1$}\put(117,0){$\kappa_2$}\put(202,0){$\kappa_1\kappa_2$}\put(78,34){$\cdot$}\put(173,34){=}

\end{overpic}

\label{fig:vertexproduct}

\caption{A product of $\kappa_1$ and $\kappa_2$.}
\end{figure}

A knotoid is {\it prime} if it cannot be written as a product of two non-trivial knotoids. A rail arc is {\it prime} if its corresponding planar knotoid is prime, i.e. whenever the planar knotoid is written as a product one component is trivial.

Goundaroulis, Dorier, and Stasiak systematically classified prime planar knotoids, thus prime rail arcs, with up to five crossings \cite{goundaroulis2019systematic}. This classification is up to orientation and symmetry-related involutions mir($\cdot$), sym($\cdot$), and rot($\cdot$), see Figure \ref{fig:inv}. The {\it mirror reflection}, mir($\cdot$), changes the each crossing and is equivalent to reflecting the associated rail arc across a plane perpendicular to its rails. {\it Symmetry}, sym($\cdot$), reflects a knotoid diagram with respect to the vertical line $0\times \mathbb{R}\subset \mathbb{R}^2$.  The composition of mirror reflection and symmetry is called {\it rotation}, rot($\cdot$). 

\begin{figure}[h]

\begin{overpic}[unit=.395mm,scale=.6]{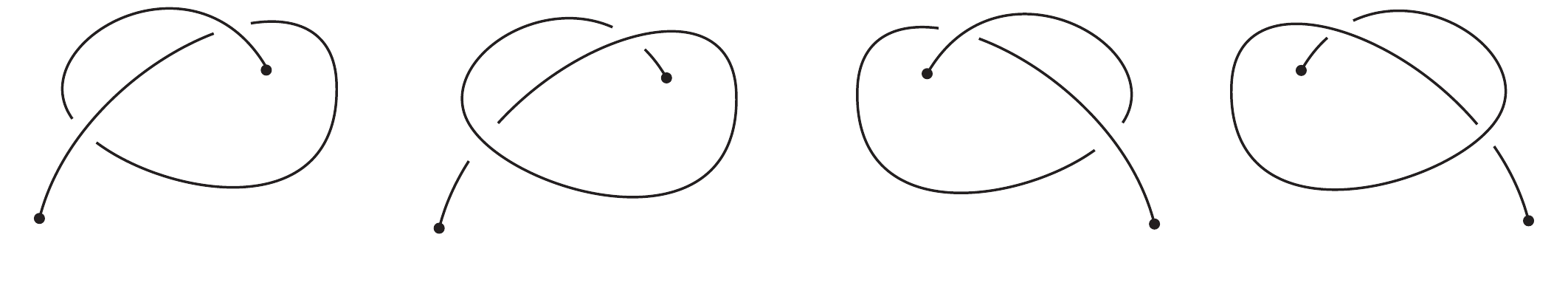}\put(36,5){$\kappa$}\put(110,5){$\text{mir}(\kappa)$}\put(193,5){$\text{sym}(\kappa)$}\put(274,5){$\text{rot}(\kappa)$}

\end{overpic}

\label{fig:inv}

\caption{Involutions of a planar knotoid diagram.}
\end{figure}

\noindent They introduced a two-number notation of knotoids, analogous to the Rolfsen notation of  knots and links. The first number represents the crossing number of the planar knotoid  and the second number represents an index to distinguish planar knotoids  with the same crossing number. The two-number notation  will be used for planar knotoids, rail arc classes, and knots. The context of its use will imply its representation. The first seven planar knotoids of their classification, all planar knotoids up to two crossings, are shown in Figure \ref{fig:2crossing}.

\begin{figure}[h]

\begin{overpic}[unit=.434mm,scale=.87]{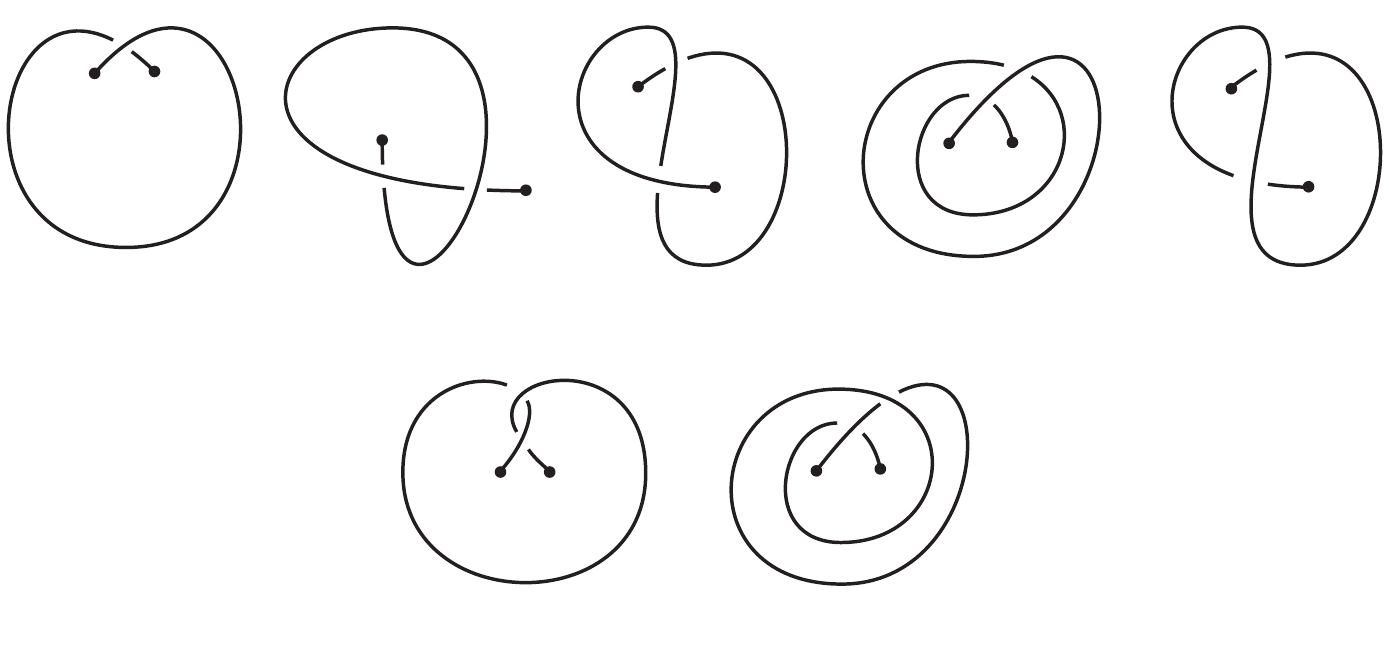}\put(24,67){$1_1$}\put(83,67){$2_1$}\put(140,67){$2_2$}\put(195,67){$2_3$}\put(260,67){$2_4$}\put(107,2){$2_5$}\put(170,2){$2_6$}

\end{overpic}

\caption{Planar knotoids, up to orientation and involution, with a crossing number at most 2.}\label{fig:2crossing}
\end{figure}

While the rail arc classes generated by reversing orientation and applying involutions may not be rail isotopic, classes related by orientation reversal or involution have the same stick number. Therefore, calculating the stick numbers of the classes in Figure \ref{fig:2crossing} determines the stick numbers of all rail arc classes up to 2 crossings.

\section{The stick number of Rail Arcs}
\label{stickindex}

This section calculates the stick number of all rail arc classes with at most two crossings. To complete the calculation a winding number invariant is defined for a certain subclass of rail arcs. This invariant is also used to calculate the stick numbers of an infinite family of rail arc classes, Corollary \ref{cor:winding}.

\begin{theorem}

For any non-trivial rail arc $r$, i.e. $r$ is not rail isotopic into a plane, \[4\leq s[r].\]

\label{thm:lower}
\end{theorem}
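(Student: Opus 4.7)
The plan is to prove the contrapositive: every rail arc built from at most three sticks is trivial, that is, rail isotopic to an arc in a plane. By the G\"ug\"umc\"u--Kauffman correspondence cited above, this is equivalent to showing that the planar knotoid obtained from a generic projection perpendicular to the rails is equivalent to the trivial knotoid. I will proceed by case analysis on the number of sticks $n\in\{1,2,3\}$, assuming at the outset that an arbitrarily small perturbation of the interior vertices has made the projection generic (transverse interior intersections, with vertices and rail projections in general position).

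The cases $n=1$ and $n=2$ are essentially immediate. A single stick $pq$ lies in the unique plane containing both parallel rails, so its projection perpendicular to the rails is a single segment from $P_1$ to $P_2$ with no crossings. With two sticks $s_1=pv_1$ and $s_2=v_1q$, the projection consists of two segments sharing only the endpoint $v_1'$; two non-collinear segments meeting at an endpoint have disjoint interiors, so again there are no crossings and the projected knotoid is trivial.

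The core of the argument is the three-stick case, with sticks $s_1=pv_1$, $s_2=v_1v_2$, $s_3=v_2q$. Adjacent pairs share an endpoint in projection so do not meet in their interiors, and $s_1,s_3$ are straight segments and so intersect in at most one point. If they do not meet in projection, the planar knotoid has no crossings. If they meet at a single point $C$, then the subpath from $C$ along $s_1$ to $v_1'$, through $s_2$ to $v_2'$, and back along $s_3$ to $C$ bounds a triangle $\Delta$ in the plane. The crucial observation is that the remaining portion of the arc, namely the sub-segment $[P_1,C]\subset s_1$ together with $[C,P_2]\subset s_3$, lies on the same two lines as the corresponding sides of $\Delta$ but on the opposite side of $C$ from the triangle; these sub-segments touch $\Delta$ only at the vertex $C$, and in particular the endpoints $P_1,P_2$ lie outside $\Delta$. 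Thus $\Delta$ is a monogon with empty interior, so a single Reidemeister I move eliminates the crossing and produces the trivial knotoid.

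The main obstacle I anticipate is not the generic three-stick picture, which is essentially a triangle argument, but the bookkeeping for degenerate projections: a stick parallel to the rails, $s_1$ and $s_3$ collinear in projection, or a vertex projecting onto a rail point. In these situations the clean statements ``at most one crossing'' and ``$\Delta$ is a non-degenerate triangle'' need re-examination. I plan to dispatch all such degeneracies uniformly by observing that non-generic configurations form a positive-codimension subset of the parameter space of stick rail arcs with fixed combinatorial type, so an arbitrarily small rail isotopy of the interior vertices reduces every degenerate case to the generic situation already handled.
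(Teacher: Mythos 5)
Your proposal is correct and follows essentially the same route as the paper: one or two sticks yield a crossingless projection, and with three sticks the only possible crossing is between the first and last sticks, producing a triangular monogon (cut off by the middle stick) that contains neither the head, the tail, nor any other strand, so a single Reidemeister I move trivializes the diagram. Your added detail---the vertical-angle observation showing $[P_1,C]$ and $[C,P_2]$ avoid the triangle, and the perturbation of degenerate projections---merely makes explicit what the paper's proof leaves implicit.
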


\begin{proof}

A one stick rail arc is trivial.  Consider the planar knotoid projection of $r$ in a plane perpendicular to its rails.  It is not possible to create a crossing in a planar knotoid diagram with only two sticks, and a knotoid diagram without crossings is trivial. Three sticks can create at most one crossing in a planar knotoid diagram. A one crossing knotoid diagram is possible only if the stick containing the head intersects the stick containing the tail. The addition of the third stick creates a triangular region in the plane that cannot contain the head nor tail. Because this triangular region is away from the projection of the rails, a Reidemeister I move will eliminate this sole crossing. Therefore, a three stick rail arc must be trivial. 

\end{proof}

\begin{corollary}

A rail arc class has a stick number of 1 if and only if it is trivial, and the rail arc classes $1_1$ and $2_1$ have a stick number of 4.

\end{corollary}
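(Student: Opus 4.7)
The corollary splits into two parts, and the plan is to dispatch them separately and constructively. For the biconditional about stick number $1$, my plan is to read directly off the definitions. A one-stick rail arc is, by definition, a single line segment with one endpoint on $\ell_1$, the other on $\ell_2$, and interior disjoint from $\ell_1\cup\ell_2$; any such segment lies in a plane containing both rails (or in a parallel plane, after rail isotopy), so the class is trivial. Conversely, if $[r]$ is trivial then $r$ is rail isotopic into a plane, and inside that plane it is rail isotopic to the straight chord between its endpoints on $\ell_1$ and $\ell_2$, which is a one-stick representative. Hence $s[r]=1$ if and only if $[r]$ is trivial.

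For the second assertion, the lower bound $s[1_1],s[2_1]\geq 4$ is immediate from Theorem \ref{thm:lower}, since neither $1_1$ nor $2_1$ is the trivial class. The whole task therefore reduces to exhibiting explicit four-stick representatives. The plan is to write down coordinates for two p.l.\ rail arcs, each consisting of sticks $s_1,s_2,s_3,s_4$ with $s_1$ touching $\ell_1$ and $s_4$ touching $\ell_2$, and verify that the projection perpendicular to the rails realizes the target knotoid diagram. With four sticks the only non-adjacent pairs are $(s_1,s_3)$, $(s_1,s_4)$, and $(s_2,s_4)$, so up to three projection crossings are geometrically available; one suffices for $1_1$ and two suffice for $2_1$. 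The heights transverse to the projection plane can then be tuned freely to install the correct over/under information at each crossing.

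The main obstacle I anticipate is confirming that the four-stick diagrams actually represent $1_1$ and $2_1$ rather than some other class of the same crossing number. For $2_1$ in particular, one must distinguish the construction from $2_2,\ldots,2_6$ of Figure \ref{fig:2crossing}. I would settle this either by reading off the Gauss code of the projection together with its tail/head data and comparing to the tables of \cite{goundaroulis2019systematic}, or, more transparently, by drawing the resulting planar knotoid diagram and producing a short sequence of Reidemeister moves (away from the endpoints) that brings it into the standard form displayed in Figure \ref{fig:2crossing}. Once this identification is made for both classes, the upper bounds $s[1_1]\leq 4$ and $s[2_1]\leq 4$ combine with Theorem \ref{thm:lower} to give equality, completing the corollary.
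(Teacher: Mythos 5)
Your proposal is correct and follows essentially the same route as the paper: the lower bound comes from Theorem \ref{thm:lower} (whose proof already notes that a one-stick rail arc is trivial), and the upper bound comes from exhibiting explicit four-stick representatives, which is exactly the content of Figure \ref{fig:4stick}. Your added care about verifying that the constructed diagrams really represent $1_1$ and $2_1$ is a reasonable elaboration of what the paper leaves to the figure, but it is not a different method.
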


\begin{proof}

This is a consequence of Theorem \ref{thm:lower} and Figure \ref{fig:4stick}.

\end{proof}

\begin{figure}[h]

\includegraphics[scale=.6]{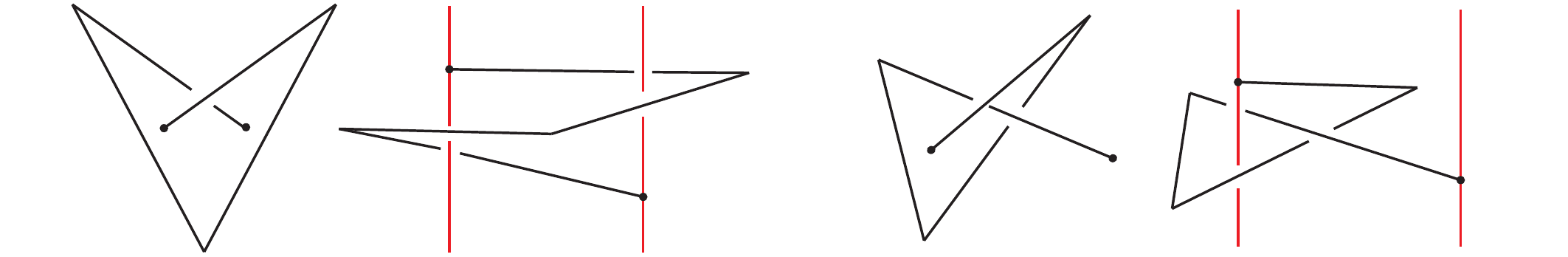}

\caption{Minimal stick representatives of $1_1$ and $2_1$.}
\label{fig:4stick}
\end{figure}

\begin{theorem}

The only rail arc classes with a stick number of 4 are $1_1$ and $2_1$.

\label{thm:4}
\end{theorem}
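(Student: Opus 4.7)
The plan is to show that any non-trivial 4-stick rail arc represents either $1_1$ or $2_1$. Let $r$ be such a rail arc with consecutive sticks $s_1, s_2, s_3, s_4$ traversed from tail to head, so only $s_1$ meets $\ell_1$ and only $s_4$ meets $\ell_2$. I would project $r$ onto a plane perpendicular to the rails to obtain a planar knotoid diagram $\kappa$. Since adjacent sticks share an endpoint, only the three non-adjacent pairs $(s_1,s_3)$, $(s_1,s_4)$, and $(s_2,s_4)$ can contribute crossings in $\kappa$, so $\kappa$ has at most three crossings. If $\kappa$ has zero crossings, $r$ is trivial, contradicting our assumption; if $\kappa$ has exactly one crossing, then $[\kappa]=1_1$ by Figure \ref{fig:2crossing}, since $1_1$ is the only non-trivial planar knotoid of crossing number one.

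For two and three crossings, the plan is a case analysis on which non-adjacent pairs contribute crossings. Using that rail arc classes related by orientation reversal and by the involutions mir, sym, rot have the same stick number, I can cut the number of distinct configurations to check. In the two-crossing case, either the crossings come from the two ``opposite'' pairs $(s_1,s_3)$ and $(s_2,s_4)$, which I would show directly realizes the diagram of $2_1$ in Figure \ref{fig:2crossing}; or one of $s_1, s_4$ participates in both crossings, say $s_1$ crosses both $s_3$ and $s_4$. In the latter situation, sticks $s_1, s_3, s_4$ together with the single intermediary $s_2$ bound a region of the projection plane disjoint from the projected rails, and the argument of Theorem \ref{thm:lower} adapts to show that a Reidemeister reduction makes $\kappa$ equivalent to $1_1$ or the trivial knotoid.

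The three-crossing case has all pairs $(s_1,s_3)$, $(s_1,s_4)$, $(s_2,s_4)$ crossing in $\kappa$. Here I would exhibit a Reidemeister II or III simplification using the triangular region bounded by $s_1,s_2,s_3$ or by $s_2,s_3,s_4$ in the projection — each such triangle lies on one side of both projected rails and so is available for a supporting disk of a move — and conclude that $\kappa$ is equivalent to a diagram of crossing number at most two, which by the preceding case is $1_1$ or $2_1$. Together with the converse direction already supplied by Figure \ref{fig:4stick}, this completes the characterization.

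I expect the main obstacle to be the explicit verification that the two-crossing configuration $(s_1,s_3),(s_2,s_4)$ yields exactly $2_1$ and never any of $2_2,\ldots,2_6$. The key geometric input is the rigidity of straight sticks combined with the constraint that $v_0$ and $v_4$ lie on the rails: the projected endpoints are forced onto the two parallel lines $\tilde{\ell}_1, \tilde{\ell}_2$, and this placement restricts which regions of $\kappa$ can contain the head and tail, ruling out the ``non-standard'' endpoint placements exhibited by $2_2$ through $2_6$ in Figure \ref{fig:2crossing}.
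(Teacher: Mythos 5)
Your overall strategy---project to a plane perpendicular to the rails, observe that only the three non-adjacent pairs $(s_1,s_3)$, $(s_1,s_4)$, $(s_2,s_4)$ can cross so there are at most three crossings, cut down cases by involutions, and identify each case---is the same skeleton as the paper's proof, which enumerates seven planar isotopy classes of four-stick projections. But there is a genuine gap in your three-crossing case. You propose to find a Reidemeister II or III simplification reducing the diagram to at most two crossings. When the three crossings alternate, no such reduction exists: the diagram is the standard pretzel projection of a trefoil with one edge removed, there is no reducible bigon, and an R3 move does not lower the crossing number. This alternating subcase is precisely where the paper needs a non-diagrammatic input: if a four-stick rail arc had an alternating three-crossing projection, a planar isotopy moves the diagram off the line through the head and tail, and closing with a single segment would produce a five-stick trefoil, contradicting $s[3_1]=6$. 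So the alternating configuration is ruled out by geometric non-realizability, not by diagrammatic simplification; your argument as stated would fail exactly there.

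A secondary weakness is that indexing cases only by \emph{which} pairs of sticks cross is coarser than what the identification step requires. For a fixed pair structure the projection can sit in several planar isotopy types, distinguished by which complementary regions contain the head, the tail, and the rail projections, and the resulting knotoid also depends on the crossing information. The paper's cases (i) and (iv) both come from $s_1$ crossing $s_4$, yet one yields $1_1$ and the other is trivial; and the opposite-pairs two-crossing configuration yields $2_1$ only when the crossings alternate and is trivial otherwise, so your claim that it ``directly realizes $2_1$'' needs the alternation hypothesis and a companion argument for the non-alternating assignment. You correctly flag the endpoint-placement issue as the main obstacle at the end, but the case analysis has to be refined along both axes (region containment and over/under data) before the identifications go through.
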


\begin{figure}[h]

\begin{overpic}[unit=.39mm,scale=.6]{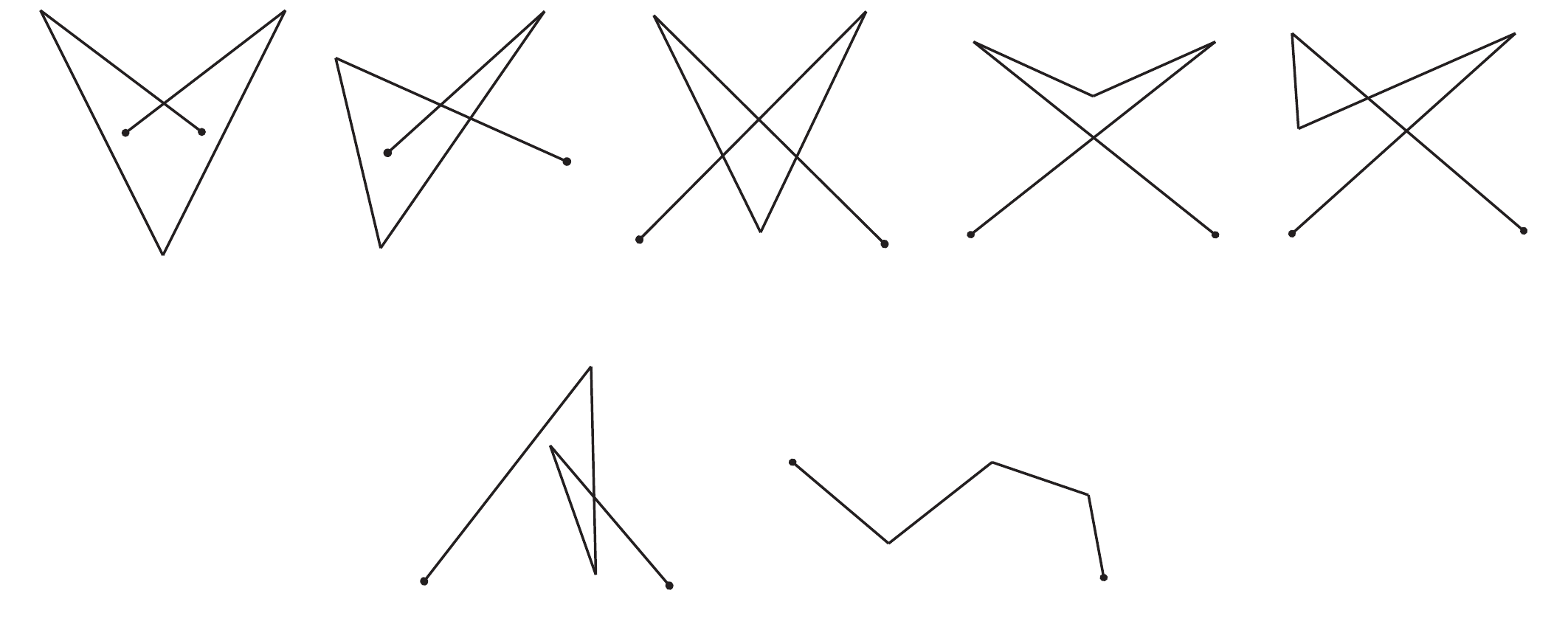}\put(30,70){(i)}\put(88,70){(ii)}\put(151,70){(iii)}\put(221,70){(iv)}\put(285,70){(v)}

\put(108,2){(vi)}\put(191,2){(vii)}
\end{overpic}

\caption{Planar knotoid projections of four stick rail arcs.}
\label{fig:fourstick}
\end{figure}

\begin{proof}

Consider the generic planar knotoid projections of a p.l. rail arc with four sticks. Order the sticks following the orientation from head to tail. Up to involution and planar isotopy, there are seven classes of projections to consider, see Figure \ref{fig:fourstick}. This enumeration is done by considering possible intersections among the projected sticks and whether a planar region is created containing the head and/or tail. For example, a one sole crossing diagram can occur if the first stick intersects the fourth stick, the first stick intersects the third stick, or the second stick intersects the fourth stick. The latter two cases are redundant by involutions and are represented by Figure \ref{fig:fourstick}(vi). In the case that the first stick intersects the fourth stick, either a planar region is created away from the rails or containing both rails, Figures \ref{fig:fourstick}(iv) and \ref{fig:fourstick}(i). Similar arguments enumerate the seven isotopy classes represented in Figure \ref{fig:fourstick}.

Regardless of crossing information, Figure \ref{fig:fourstick}(i) represents $1_1.$ If the crossings of Figure \ref{fig:fourstick}(ii) alternate, then it represents $2_1$. If the crossings of Figure \ref{fig:fourstick}(ii) do not alternate, then it represents the trivial rail arc. If the crossings of Figure \ref{fig:fourstick}(iii) alternate, then a planar isotopy will take the diagram to miss the line containing the head and tail. Connecting the head and tail by a line segment gives a 5 stick knot diagram of the trefoil. Since no non-trivial knot has a stick number less than 6, this contradiction implies that Figure \ref{fig:fourstick}(iii)'s crossing cannot alternate. If the crossings of Figure \ref{fig:fourstick}(iii) do not alternate, then the diagram is trivial after Reidemeister I and II moves. Figures \ref{fig:fourstick}(iv) is trivial regardless of crossing information. No matter the crossing information given to Figure \ref{fig:fourstick}(v)'s diagram, a sequence of Reidemeister I and II moves will eliminate all crossings. Figures \ref{fig:fourstick}(vi) and \ref{fig:fourstick}(vii) represent the trivial class.

\end{proof}

\begin{corollary}

The rail arc classes $2_2$, $2_4$, $2_5$, and $3_2$ each have a stick number of 5.

\end{corollary}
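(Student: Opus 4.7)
The plan is to prove this corollary by a two-sided bound argument, using Theorem \ref{thm:4} for the lower bound and explicit constructions for the upper bound. Since each of $2_2$, $2_4$, $2_5$, and $3_2$ is a non-trivial rail arc class distinct from $1_1$ and $2_1$, Theorem \ref{thm:4} immediately gives $s[r] \geq 5$ for each class $r$ in the list. So the heart of the proof is the matching upper bound.

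For the upper bound I would exhibit, for each of the four rail arc classes, an explicit 5-stick representative and verify that its planar knotoid projection is (equivalent to) the corresponding diagram from Figure \ref{fig:2crossing}. Concretely, I would present a single figure with four panels, one per class, showing a generic projection onto a plane perpendicular to the rails, with the five sticks drawn and crossing information indicated. The verification that the constructed p.l.\ arc is actually a rail arc amounts to checking that the endpoints lie on $\ell_1$ and $\ell_2$ respectively and that the interior is disjoint from $\ell_1 \cup \ell_2$, which will be immediate from the coordinates chosen. Identifying the class represented is then a matter of reading off the planar knotoid diagram and matching it to Figure \ref{fig:2crossing} (possibly after Reidemeister moves or involutions, which preserve the stick number).

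The main obstacle is finding 5-stick diagrams for $2_5$ and $3_2$, since these have more intricate crossing patterns than the near-trivial $2_2$ and $2_4$. For $2_2$ and $2_4$, small perturbations of the 4-stick diagram of $2_1$ (adding a stick that creates one extra nugatory- or non-alternating-style crossing) will suffice; these can be drawn essentially by inspection. For $2_5$, one needs a 5-stick configuration whose projection realizes the two-crossing alternating loop with both endpoints adjacent to the infinite region, and for $3_2$ one needs a three-crossing projection. The enumeration of projection types carried out in the proof of Theorem \ref{thm:4} is a useful guide: taking five sticks instead of four opens up exactly the projection types that were previously obstructed, and one of these in each case can be decorated with crossing information realizing the desired class. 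I would display the four explicit diagrams and note that the pattern of crossings on each matches the corresponding entry of Figure \ref{fig:2crossing}, which together with the lower bound completes the proof.
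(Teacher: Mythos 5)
Your proposal matches the paper's argument exactly: the lower bound $s[r]\geq 5$ follows from Theorem \ref{thm:lower} together with Theorem \ref{thm:4} (the latter alone only rules out stick number $4$, so you implicitly also need the former to rule out $1$, $2$, and $3$ sticks), and the upper bound is supplied by explicit 5-stick representatives, which the paper displays in Figure \ref{fig:5stick}. The only substantive work in either version is producing and verifying the four diagrams, which you correctly identify as the heart of the matter.
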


\begin{proof}

This follows from Theorem \ref{thm:lower}, Theorem \ref{thm:4}, and Figure \ref{fig:5stick}.

\end{proof}

\begin{figure}[h]

\includegraphics[scale=.6]{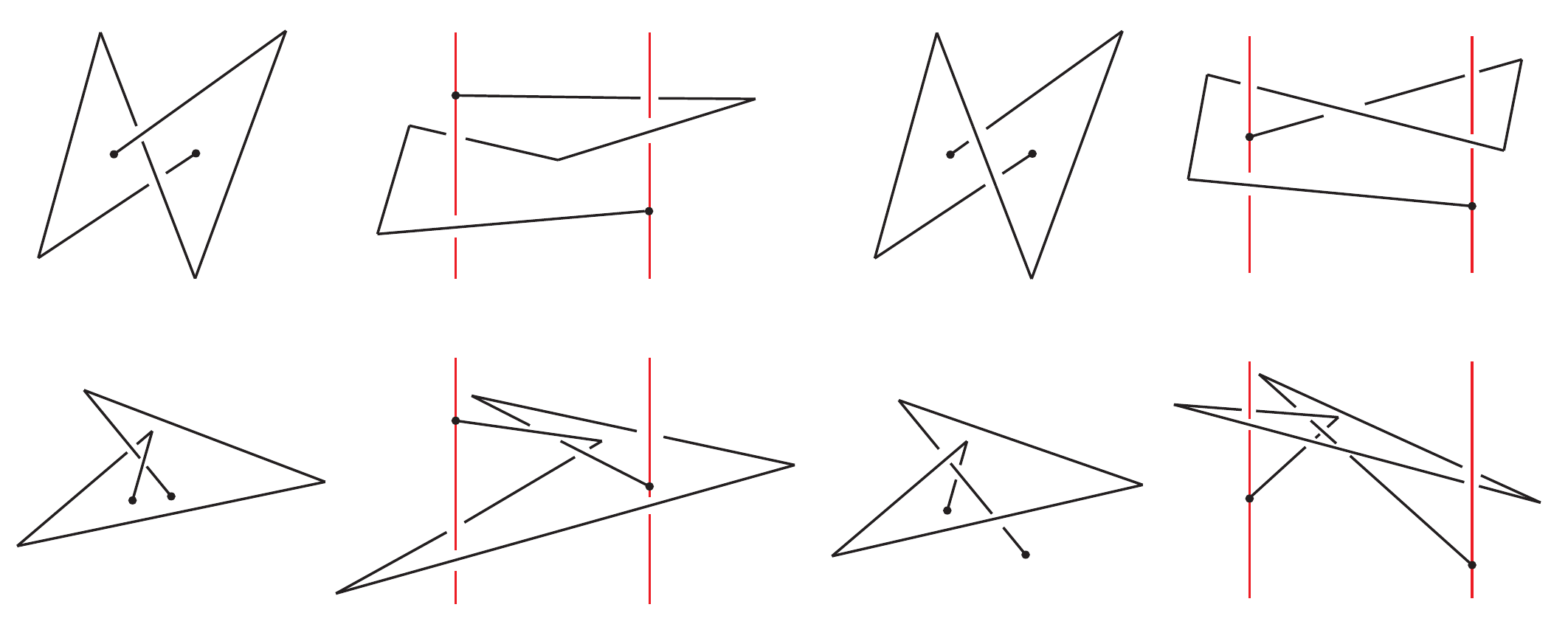}

\caption{Minimal stick representatives of $2_2$, $2_4$, $2_5$, and $3_2$.}
\label{fig:5stick}
\end{figure}

Let $\kappa$ be a planar knotoid diagram with an underpass $\alpha$ embedded in the complement of $\kappa$'s interior. Then, $\gamma:=\kappa\cup\alpha$ is an immersed closed curve in general position. If necessary, canonically smooth where $\kappa$ and $\alpha$ meet so $\gamma$ has well-defined tangent lines at the head and tail of $\kappa$.  For a point $p\in \mathbb{R}^2-\gamma$, suppose that $(r_p(t),\theta_p(t))$ is a parameterization of $\gamma$ in polar coordinates based at $p$ for $0\leq t \leq 1$. The {\it winding number} of $\gamma$ at $p$  is defined by \[ w_\gamma(p)=\frac{\theta_p(1)-\theta_p(0)}{2\pi}\in\mathbb{Z}.\] This can be extended to regular values $p$ of the immersion $\gamma$ with a polar parameterization for $0<t<1$, \[ w_\gamma(p)=\frac{\lim\limits_{t\to1^-}\theta_p(t)-\lim\limits_{t\to0^+}\theta_p(t)}{2\pi}\in\frac{1}{2}\mathbb{Z}.\] 

\begin{lemma}[Kutluay '20 \cite{kutluay2020winding}]

Let $H$ and $T$ be the head and tail of $\kappa$. Then $w_\gamma(H)-w_\gamma(T)=\alpha\cdot \kappa$ where $\alpha\cdot\kappa$ is the intersection number of $\alpha$ and $\kappa$ excluding their endpoints. 
\label{lem}
\end{lemma}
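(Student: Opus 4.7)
The plan is to compute $w_\gamma(H) - w_\gamma(T)$ by sliding the basepoint $p$ along a small pushoff of $\alpha$ from near $T$ to near $H$, exploiting the standard fact that on $\mathbb{R}^2\setminus\gamma$ the function $w_\gamma$ is locally constant and integer-valued, and jumps by $\pm 1$ whenever the basepoint crosses $\gamma$ transversely (with sign given by the local orientation of $\gamma$).

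The first step is to reinterpret $w_\gamma(H)$ and $w_\gamma(T)$ as averages. At any regular (non-self-intersection) point $p$ of the immersed closed curve $\gamma$ I claim that
\[ w_\gamma(p)\;=\;\tfrac{1}{2}\bigl(w_\gamma^+(p)+w_\gamma^-(p)\bigr), \]
where $w_\gamma^\pm(p)$ are the integer winding numbers of the two local regions of $\mathbb{R}^2\setminus\gamma$ on either side of $\gamma$ at $p$. The reason is that with $\gamma$ reparameterized based at $p$, both $\lim_{t\to 0^+}\theta_p(t)$ and $\lim_{t\to 1^-}\theta_p(t)$ point along the tangent line to $\gamma$ at $p$, but in opposite directions, so they differ by $\pi$ modulo $2\pi$; this extra half-turn accounts for the $\tfrac{1}{2}$ discrepancy relative to the integer winding just outside $\gamma$. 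The canonical smoothing at $H$ and $T$ ensures that both are regular points of $\gamma$, so the averaging formula applies at each.

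The second step is to introduce two parallel pushoffs $\beta_L$ and $\beta_R$ of $\alpha$, one on each side of $\alpha$'s orientation from $H$ to $T$. Since $\alpha$ is embedded and disjoint from the interior of $\kappa$, each $\beta$ is a path from a point in one local region of $\gamma$ near $H$ to the corresponding region near $T$, and each transverse crossing of $\beta_L$ or $\beta_R$ with $\gamma$ is in sign-preserving bijection with an interior intersection of $\alpha$ with $\kappa$. Summing the $\pm 1$ winding jumps along each pushoff gives
\[ w_\gamma(L_H)-w_\gamma(L_T)\;=\;\alpha\cdot\kappa\;=\;w_\gamma(R_H)-w_\gamma(R_T), \]
where $L_H, R_H$ (resp.\ $L_T, R_T$) denote the two local regions of $\gamma$ at $H$ (resp.\ $T$), and the sign conventions for $\cdot$ are fixed so that the two pushoff computations agree with this intersection number.

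Averaging these two identities and applying the averaging formula from the first step at both $H$ and $T$ yields $w_\gamma(H)-w_\gamma(T) = \alpha\cdot\kappa$. The main obstacle is bookkeeping at the smoothed corners: one must verify that near $H$ and $T$ the two local regions of $\mathbb{R}^2\setminus\gamma$ coincide with the left and right sides of $\alpha$ (equivalently of $\kappa$), which is immediate from the canonical smoothing since $\alpha$ and $\kappa$ share a tangent line there, and then confirm that the orientation conventions make each pushoff crossing with $\kappa$ contribute with the same sign as in the intersection form $\alpha\cdot\kappa$.
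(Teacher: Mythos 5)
Your proof is correct and is essentially the paper's argument: both compute $w_\gamma(H)-w_\gamma(T)$ by transporting a basepoint along $\alpha$ and recording the $\pm 1$ jump of the winding number at each transverse crossing with $\kappa$; the paper moves the point directly along $\alpha$ using the half-integer extension of $w_\gamma$, while you achieve the same effect slightly more carefully via the two parallel pushoffs together with the averaging identity $w_\gamma(p)=\tfrac{1}{2}\bigl(w_\gamma^+(p)+w_\gamma^-(p)\bigr)$ at the smoothed points $H$ and $T$. One wording slip to fix: you hypothesize $\alpha$ ``disjoint from the interior of $\kappa$,'' which taken literally forces $\alpha\cdot\kappa=0$ and trivializes the statement; for the lemma as used you should only assume $\alpha$ is embedded and meets $\kappa$ transversely away from the endpoints.
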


\begin{proof}
As a point $p$ moves along $\alpha$ from $L$ to $H$, the value of $w_\gamma(p)$ increases by 1 when $\alpha$ crosses $\gamma$ from right to left and decreases by 1 when $\alpha$ crosses $\gamma$ from left to right. Since the changes at self-intersections of $\alpha$ cancel each other, the total change in $w_\gamma$ is equal to $\alpha \cdot \kappa$.
\end{proof}

Since $\alpha$ is embedded in the complement of $\kappa$'s interior, Lemma \ref{lem} implies that $w_\gamma(H)=w_\gamma(T)$. As argued in Section 6.2 of \cite{kutluay2020winding}, $w_\gamma(H)$ and $w_\gamma(T)$ remain invariant under Reidemeister moves since these local deformations are taken in a neighborhood excluding the head and tail. Suppose that $\kappa'$ is equivalent to $\kappa$ such that there exists an underpass $\alpha'$ embedded in its complement. Canonically smooth where $\kappa'$ meet $\alpha'$ to get $\gamma'=\kappa'\cup\alpha'$ with well-defined tangent lines at the head and tail of $\gamma'$. There exists a sequence of Reidemeister moves, away from $\kappa'\cap\alpha'$, and planar isotopies taking $\gamma'$ to $\gamma$ while fixing the head and tail since any two under passes of the same knotoid diagram give the same under companion. After the sequence of Reidemeister moves taking $\kappa$ to $\kappa'$, there is a sequence of Reidemeister moves  taking $\alpha$ to $\alpha'$ since they are both under passes.  Therefore, $w_\gamma(H)=w_\gamma(T)=w_\gamma'(H)=w_\gamma'(T)$.

\begin{definition}

Suppose that $[r]$ is a rail arc class with a representative that projects to produce a planar knotoid diagram $\kappa$ such that its head and tail belong to same planar region. Connect the head $H$ and tail $T$ of $\kappa$ with an arc $\alpha$ missing $\kappa$ except at its endpoints such that $\gamma=\kappa \cup \alpha$ is smooth at $\kappa\cap\alpha$. Let \[w[r]:=\text{sgn}(w_\gamma(H))\lfloor|w_\gamma(H)|\rfloor=\text{sgn}(w_\gamma(T))\lfloor|w_\gamma(T)|\rfloor\] be the {\it winding number} of the rail arc class $[r]$ where $\lfloor\cdot\rfloor$ denotes the least integer function and sgn is -1 if $w_\gamma(H)<0$, 1 if $w_\gamma(H)>0$, and 0 if $w_\gamma(H)=0$.

\end{definition}

\begin{remark}
The previous discussion implies that the winding number of rail arc classes, that admit planar knotoid representations with endpoints in the same region, is well-defined and independent of representative. Therefore, two rail arc classes with winding numbers are rail isotopic only if their winding numbers are equal. \end{remark}
 
\begin{remark} Reversing the orientation changes the sign of a rail arc class' winding number. Reversing the orientation of $\gamma=\kappa\cup\alpha$ changes $w_\gamma(H)$ to $-w_\gamma(H)$ since the values $\lim_{t\to1^-}\theta_p(t)$ and $\lim_{t\to0^+} \theta_p(t)$ interchange. When a winding number is well-defined, a rail arc class is equivalent to it orientation reverse only if it has a winding number of zero. 
\end{remark}

\begin{figure}[h]

\includegraphics[scale=.6]{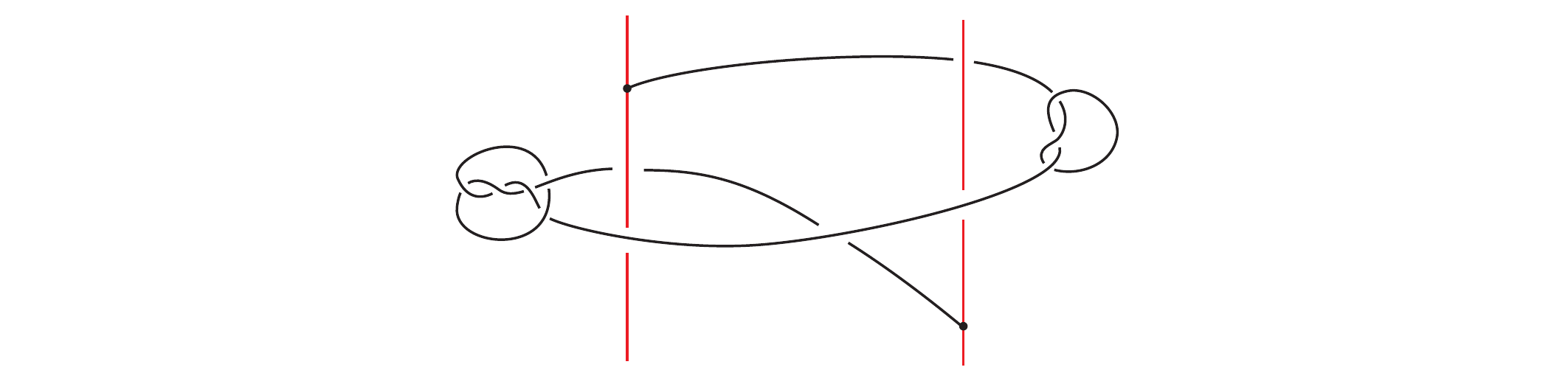}

\caption{Rail arc with a winding number of -2.}
\label{fig:wind}
\end{figure}

\begin{theorem}

For any non-trivial rail arc class $[r]$ that admits a planar knotoid projection $\kappa$ where the head and tail can be connected by a simple arc in the complement of $\kappa$'s interior, the stick number of $[r]$ satisfies \[ 4+2(|w[r]|-1)\leq s[r].\] 

\label{thm:winding}
\end{theorem}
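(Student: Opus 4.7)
The plan is to take a minimal stick representative, project it, close it up using a single extra line segment as the underpass, and bound the resulting winding number about $H$ by summing, edge by edge, the angular contribution each makes to a continuous polar angle based at $H$.

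Fix a minimal stick representative $r$ of $[r]$ with $s[r]=n$ sticks and project to obtain a planar knotoid diagram $\kappa$ with $n$ sticks. The hypothesis of the theorem lets us arrange, after a rail isotopy that preserves the stick count (e.g.\ sliding $H$ and $T$ far along their rails in a direction away from the rest of the projection so that the first and last sticks dominate the picture), that $H$ and $T$ lie in a common face of $\mathbb{R}^2\setminus\kappa$ and that the straight segment $\alpha:=\overline{HT}$ lies entirely in that face, meeting $\kappa$ only at its endpoints. Form the closed polygon $\gamma:=\kappa\cup\alpha$ and smooth at $H$ and $T$; it has $n+1$ edges. Since $\alpha\cdot\kappa=0$, Lemma~\ref{lem} gives $w_\gamma(H)=w_\gamma(T)$, and by definition $|w[r]|=\lfloor|w_\gamma(H)|\rfloor$; moreover $w_\gamma(H)\in\tfrac{1}{2}\mathbb{Z}$ because $H$ is a regular value of the immersion $\gamma$.

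The core computation is then the following angular-sum bound. Parametrize $\gamma$ so that it starts at $H$, traverses $\alpha$ to $T$, and then follows $\kappa$ back to $H$, and let $\theta(t)$ be a continuous branch of the polar angle of $\gamma(t)-H$, well defined for $t\in(0,1)$ by general position. On the two edges of $\gamma$ incident to $H$, namely $\alpha$ itself and the last stick of $\kappa$, the vector $\gamma(t)-H$ moves along a ray through the origin, so $\theta$ is constant and the contribution is $0$. On each of the remaining $n-1$ edges, which miss $H$, $\theta$ varies monotonically by a signed amount of strict absolute value less than $\pi$, because a line segment not passing through $H$ subtends an angle strictly less than $\pi$ as seen from $H$. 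Summing gives
\[
|2\pi\,w_\gamma(H)|<(n-1)\pi, \qquad\text{so}\qquad |w_\gamma(H)|<\tfrac{n-1}{2}.
\]
Because $w_\gamma(H)\in\tfrac{1}{2}\mathbb{Z}$, this strict inequality forces $|w_\gamma(H)|\le\tfrac{n-2}{2}$, and hence $|w[r]|=\lfloor|w_\gamma(H)|\rfloor\le\tfrac{n-2}{2}$, which rearranges to $4+2(|w[r]|-1)\le s[r]$.

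The main obstacle is the first step: producing, for a minimal stick representative, a planar projection in which $H$ and $T$ share a face with the segment $\overline{HT}$. This is where the theorem's hypothesis and the invariance of $w[r]$ under choice of representative and underpass—developed in the earlier discussion of the winding number—must be used. Once such a representative--projection--underpass triple is in hand, the angular-sum estimate is routine; the reason a single added stick does not cost us is precisely that \emph{both} edges of $\gamma$ at $H$ contribute zero to the turn about $H$, which is exactly what matches the bound $4+2(|w[r]|-1)$.
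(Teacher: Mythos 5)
Your core estimate is a genuinely different route from the paper's. The paper fixes coordinates with $H$ and $T$ on the $x$-axis and counts transitions between the upper and lower half-planes, arguing inductively that the first winding costs four sticks and each further winding costs two more. You instead close up with a single segment and bound $2\pi w_\gamma(H)$ by summing the angle each edge subtends at $H$: the two edges incident to $H$ contribute $0$, each of the remaining $n-1$ edges contributes strictly less than $\pi$, and the half-integrality of $w_\gamma(H)$ converts the strict inequality $|w_\gamma(H)|<\tfrac{n-1}{2}$ into $|w_\gamma(H)|\le\tfrac{n-2}{2}$. That computation is correct (and the middle edges really do miss $H$, since the preimage of $H$ under the perpendicular projection is exactly the rail $\ell_1$, which the interior of the arc avoids), and it is arguably tighter and more systematic than the paper's induction.

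However, there is a genuine gap in your setup, and your proposed patch for it does not work. You need the perpendicular projection of a \emph{minimal} stick representative to place $H$ and $T$ in a common face of $\mathbb{R}^2\setminus\kappa$ with $\overline{HT}$ contained in that face; the theorem's hypothesis only guarantees that \emph{some} representative of the class projects this way, and that property is not preserved under the isotopies relating it to a minimal representative. Your suggested fix --- sliding $H$ and $T$ far along their rails --- changes nothing in the projection: the projection is onto a plane perpendicular to the rails, so each rail projects to the single fixed point $H$ or $T$, and translating the endpoints along the rails leaves the planar diagram identical. If $\overline{HT}$ does cross $\kappa$, then by Lemma~\ref{lem} one has $w_\gamma(H)-w_\gamma(T)=\overline{HT}\cdot\kappa$, which need not vanish, so the quantity $w_\gamma(H)$ you bound is not known to be the one appearing in the definition of $w[r]$ (which is computed from a diagram admitting an \emph{embedded} connecting arc, where the two values agree). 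To close the gap you would need either to show that a minimal stick representative can always be isotoped, without increasing the stick count, to one whose projection has $H$ and $T$ in a common face, or to extend the invariance discussion preceding the definition of $w[r]$ to show that $w_\gamma(H)$ is unchanged when the embedded connecting arc is replaced by an arbitrary straight underpass. As written, the reduction from $s[r]$ to the angular-sum bound is not established.
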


\begin{proof}

\begin{figure}[h]

\begin{overpic}[unit=.43mm,scale=.6]{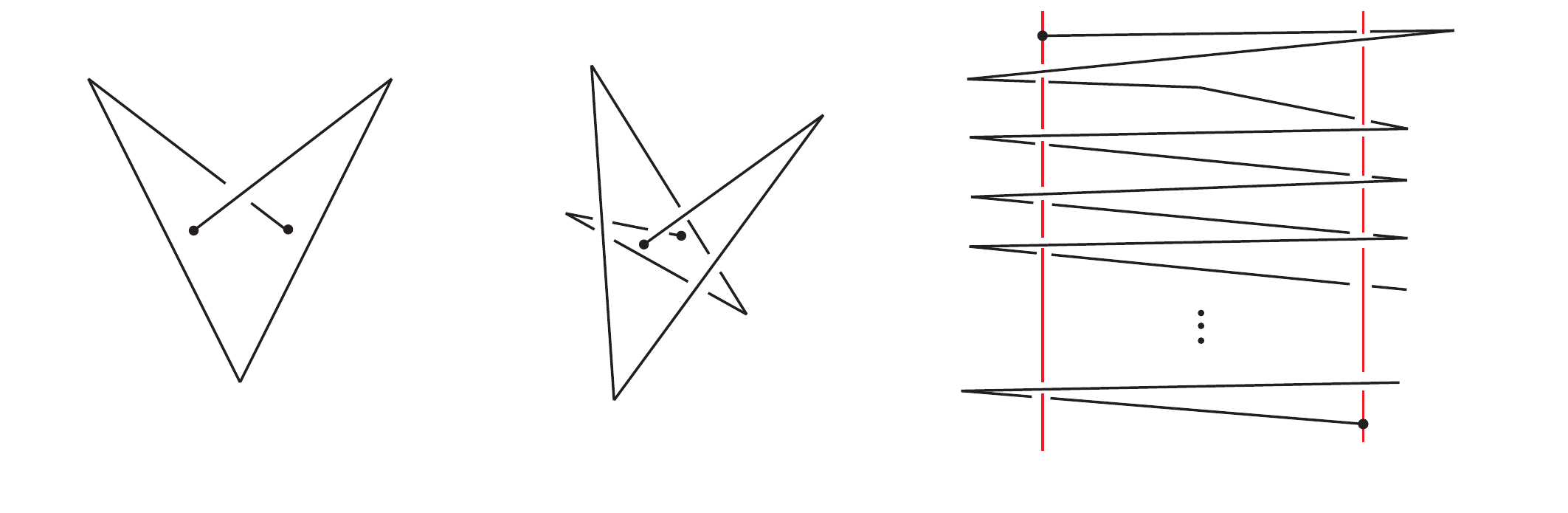}\put(43,2){(i)}\put(123,2){(ii)}\put(226,2){(iii)}

\end{overpic}

\caption{Minimal number of sticks needed to wind.}
\label{fig:minwind}
\end{figure}

In the case that $w[r]=0$, the inequality holds as a consequence of Theorem \ref{thm:lower}. Without loss of generality, assume that the head and tail of $\kappa$ lie on the $x$-axis, the tail is to the left of the head, $w[r]<0$, and the stick stemming from the tail ends in the upper half-plane. This is general since the stick number is involution invariant. In order for the arc to create a non-zero winding number it must pass to the lower half-plane and return to the upper half-plane at least once. At least 2 sticks are needed to go from the upper half-plane to the lower half-plane then back to the upper half-plane. Thus, including the stick connecting to the head, at least 4 sticks are needed to create a non-zero winding number.

 Alternating from the upper and lower half-planes, at least 4 sticks are needed to create a knotoid that winds once around the rails, see Figure \ref{fig:minwind}(i). To create a second winding, the fourth stick can end in the lower half-plane instead of the second rail. At least two additional sticks are needed to complete the second winding, one ending in the upper half-plane and one connecting to the second rail. This minimality argument continues inductively where the final stick of the previous winding is instead taken to end in the lower half-plane and at least two sticks are needed to complete the next winding, see Figures \ref{fig:minwind}(ii), \ref{fig:minwind}(iii).

\end{proof}

Let $[W_n]$ denote the class of the {\it winding rail arc} given by the representative $W_n$ that winds monotonically around and down its rails $n\in\mathbb{Z}$ times clockwise if $n<0$ and counter-clockwise if $n>0$, see Figure \ref{fig:w_n} for a representative  with $n<0$. Connecting the endpoints of the planar knotoid diagram $W_n$ with a line segment $\alpha$ shows that $w[W_n]=n$ since the angle parametrization of $\gamma=W_n\cup\alpha$ at the head $H$ increases or decreases depending if $n$ is positive or negative. This gives infinitely many rail arc classes that are not equivalent to their rev($\cdot$), sym($\cdot$), or rot($\cdot$) since these involutions invert the winding number. Reversing the orientation negates the winding number. Reflecting the planar knotoid diagram across a vertical line to produce sym($\cdot$) and rot($\cdot$) also negates the winding number.

\begin{figure}[h]

\includegraphics[scale=.6]{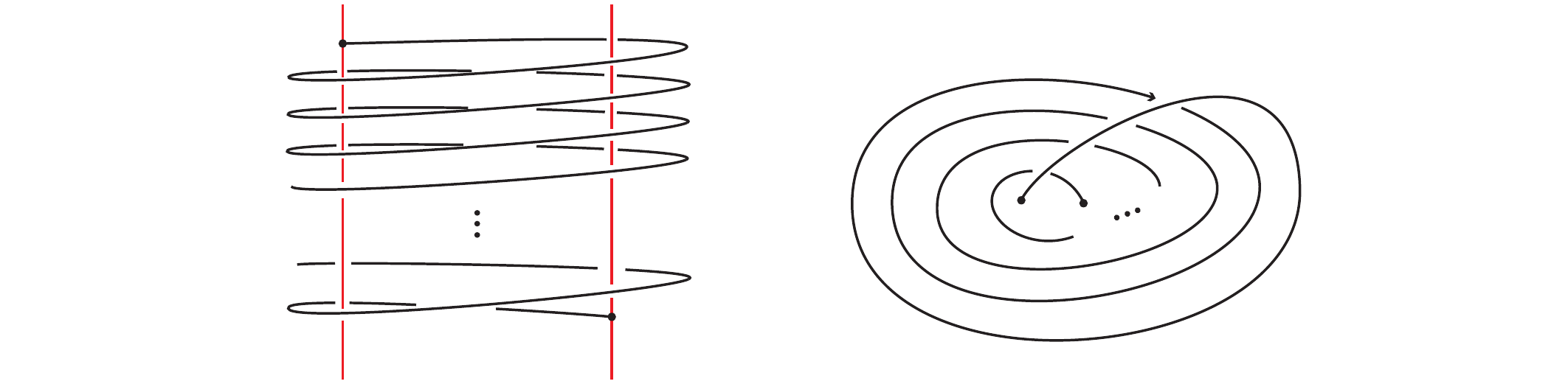}

\caption{Rail arc diagram and planar knotoid diagram of the winding rail arc $[W_n]$.}
\label{fig:w_n}
\end{figure}

\begin{corollary}

The winding rail arc class $[W_n]$ has a stick number of $4+2(|n|-1)$ for any non-zero integer $n$.

\label{cor:winding}

\end{corollary}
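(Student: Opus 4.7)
The plan is to establish the corollary as a two-sided matching of lower and upper bounds. For the lower bound, I would appeal directly to Theorem \ref{thm:winding}: the winding rail arc $[W_n]$ admits the standard planar knotoid projection $W_n$ in which the head and tail lie on a common horizontal line and can be joined by a simple arc $\alpha$ disjoint from the interior of $W_n$. As noted in the discussion preceding the corollary, smoothing $\gamma = W_n \cup \alpha$ and computing the angular change at the head gives $w_\gamma(H) = n$, so $w[W_n] = n$. Theorem \ref{thm:winding} then immediately yields $s[W_n] \geq 4 + 2(|n|-1)$.

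For the upper bound I would produce an explicit stick representative with exactly $4 + 2(|n|-1) = 2|n|+2$ sticks realizing $[W_n]$. By involution symmetry it suffices to treat $n > 0$. The construction is the one already schematically depicted in Figure \ref{fig:minwind}: start from a point on $\ell_1$, alternate sticks that cross the plane containing the rails so that the projection onto a plane perpendicular to the rails spirals once around the rail axis per pair of sticks, and finally close off with a stick ending on $\ell_2$. Concretely, choosing the two rails as $\ell_1 = \{(0,0,z)\}$ and $\ell_2 = \{(1,0,z)\}$, I would place vertices $v_0 \in \ell_1$, then $v_1, v_2, \dots, v_{2n}$ alternating between small positive and small negative $y$-values while decreasing in $z$ so that the projection onto the $xy$-plane winds $n$ times clockwise about the origin, and finish with $v_{2n+1} \in \ell_2$. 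This uses $2n+1$ interior sticks plus the final stick to $\ell_2$, totaling $2|n|+2$ sticks, and its planar knotoid projection is isotopic to $W_n$.

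The two bounds coincide, giving $s[W_n] = 4 + 2(|n|-1)$.

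The main obstacle is the upper-bound construction: one must check that the vertex sequence indeed produces a \emph{simple} p.l.\ arc (no spurious self-intersections in $\mathbb{R}^3$) and that its knotoid projection is equivalent to $W_n$ rather than some cancelling variant with smaller winding. I would handle this by monotonically decreasing the $z$-coordinate of the vertices so distinct sticks lie in disjoint horizontal slabs (ensuring embeddedness), and by verifying that each consecutive pair of sticks contributes exactly one half-turn of winding in the projection, so the total winding is precisely $n$. Once embeddedness and winding count are in place, the count of sticks is immediate and the corollary follows.
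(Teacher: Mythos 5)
Your proposal is correct and follows essentially the same route as the paper: the lower bound comes from Theorem \ref{thm:winding} together with the computation $w[W_n]=n$, and the upper bound comes from the explicit alternating-stick construction depicted in Figure \ref{fig:minwind}(iii), which the paper cites without writing out coordinates. One small bookkeeping slip: a vertex list $v_0,\dots,v_{2n+1}$ gives only $2n+1$ sticks, so to realize the claimed total of $2|n|+2$ sticks you need one more vertex (e.g.\ $v_0,\dots,v_{2n+2}$), matching the four sticks needed already for $n=1$.
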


\begin{proof}

This follows from Theorem \ref{thm:winding}  and Figure \ref{fig:minwind}(iii).

\end{proof}

\begin{corollary}

Both $2_3$ and $2_6$ have a stick number of 6.

\end{corollary}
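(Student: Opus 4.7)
The goal is to combine an explicit six-stick construction for the upper bound with the winding-number invariant for the lower bound.

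For the upper bound, the plan is to exhibit a stick rail arc on six sticks representing each of $2_3$ and $2_6$, in the spirit of the $[W_2]$ construction in Figure \ref{fig:minwind}(ii). Starting from one rail, the arc oscillates between the upper and lower half-planes adjacent to the rails, producing the two required crossings, and then terminates on the second rail. The crossing signs of the two crossings are read off from the planar knotoid diagrams drawn in Figure \ref{fig:2crossing}, which dictates which half-plane each interior vertex lies in for $2_3$ versus $2_6$.

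For the lower bound, I proceed in three steps. First, Theorem \ref{thm:lower} gives $s[r]\geq 4$ for any non-trivial class. Second, since $2_3$ and $2_6$ are distinct from both $1_1$ and $2_1$, Theorem \ref{thm:4} improves this to $s[r]\geq 5$. Third, to rule out a five-stick representative, I would verify that each of $2_3$ and $2_6$ admits a planar knotoid diagram, namely the one drawn in Figure \ref{fig:2crossing}, whose head and tail lie in a common region of the diagram complement. Consequently the winding number is defined. A direct polar-angle computation for $\gamma=\kappa\cup\alpha$, obtained by choosing an arc $\alpha$ connecting the head to the tail outside $\kappa$, then shows $|w[2_3]|=|w[2_6]|=2$. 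Theorem \ref{thm:winding} then yields $s[r]\geq 4+2(2-1)=6$, matching the six-stick construction.

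The main obstacle is the winding-number verification: one must visually trace $\gamma$ for each diagram, checking that the head and tail are co-regional and tallying the accumulated angle to confirm that the winding has magnitude $2$ rather than $0$ or $1$. A convenient shortcut is available whenever one of $2_3$, $2_6$ coincides, up to the involutions $\mathrm{mir}$, $\mathrm{sym}$, $\mathrm{rot}$, with the winding rail arc $[W_2]$: for that class Corollary \ref{cor:winding} delivers the bound $s[r]\geq 6$ immediately, so only the remaining class requires the case-by-case polar computation.
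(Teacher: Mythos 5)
Your proposal is correct and takes essentially the same route as the paper: the paper likewise computes $w[2_3]=w[2_6]=-2$, applies Theorem \ref{thm:winding} to get the lower bound of $6$, and cites explicit six-stick representatives (Figures \ref{fig:minwind}(ii) and \ref{fig:minstick}) for the upper bound. Your intermediate appeals to Theorems \ref{thm:lower} and \ref{thm:4} are harmless but unnecessary once the winding-number bound is in hand.
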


\begin{proof}

Since $w[2_3]=w[2_6]=-2$, this follows from Theorem \ref{thm:winding} and Figures \ref{fig:minwind}(ii) \& \ref{fig:minstick}.

\end{proof}

\begin{figure}[h]

\includegraphics[scale=.6]{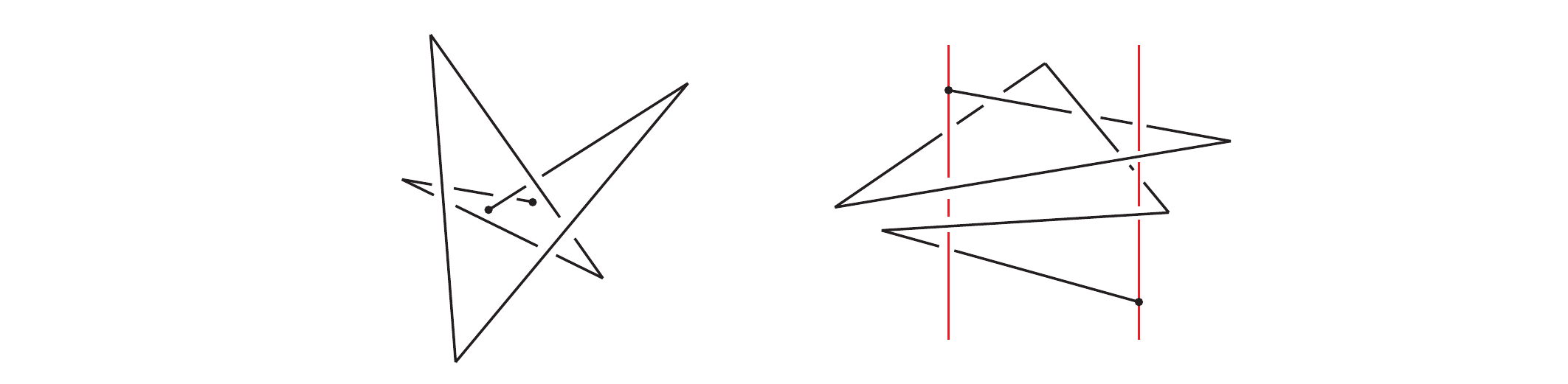}

\caption{Minimal stick representative of $2_6$.}
\label{fig:minstick}
\end{figure}

This completes the stick number calculation of all rail arc classes with a crossing number at most 2.

\section{The Rail stick number of Knots and Links}
\label{railindex}

This section relates the stick number of knots to the stick number of rail arcs.

\begin{definition}

The {\it rail stick number} of a p.l. knot class $[K]$ is the minimum number of sticks needed to create a stick rail arc whose under or over companion is $[K]$. The rail stick number of a knot class $[K]$ is denoted $rs[K]$.

\end{definition}

 There are many non-rail isotopic rail arcs that have isotopic companions. The method used to calculate the rail stick number of knots, Theorem \ref{thm:companion}, will also calculate the stick number of  the rail arc classes used. Calculating the stick numbers of all rail arc classes with small crossing number is a more daunting task than calculating all rail stick numbers of knots with  small crossing number since there are 1137 prime rail arc  classes with crossing number at most five \cite{goundaroulis2019systematic}. The companion map taking rail arc classes to their under (or over) companion is surjective, remove a small arc of a knot diagram to produce a planar knotoid that maps to the knot class, but is emphatically not injective.

 \begin{theorem}

 For any knot class $[K]$, \[  s[K]-2 \leq rs[K] \leq s[K]-1.\]
 
 \label{thm:companion}
 
 \end{theorem}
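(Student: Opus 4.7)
The plan is to prove the two inequalities separately by direct constructions.

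For the upper bound $rs[K]\leq s[K]-1$, I would take a minimum stick representative $K$ of $[K]$ with $s=s[K]$ sticks and vertices $v_0,\ldots,v_{s-1}$, fix any edge $e_0=v_0v_1$, and remove it to get an arc $r:=K\setminus e_0$ with $s-1$ sticks. The key step is to choose a direction $\vec d$ very close to (but not parallel to) $v_1-v_0$ and otherwise generic, used simultaneously as the rail direction and the projection direction. Under the projection $\pi$ onto a plane perpendicular to $\vec d$, the image $\pi(e_0)$ shrinks as $\vec d$ approaches $(v_1-v_0)/|v_1-v_0|$, so for $\vec d$ close enough it is a short segment meeting $\pi(K\setminus e_0)$ only at the shared vertices $\pi(v_0),\pi(v_1)$, with no crossings. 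Placing rails $\ell_0,\ell_1$ through $v_0,v_1$ in direction $\vec d$ then makes $r$ into a stick rail arc with $s-1$ sticks. Choosing the underpass $\alpha:=\pi(e_0)$ (vacuously under, having no crossings with $\kappa:=\pi(r)$) gives $\kappa\cup\alpha=\pi(K)$, a diagram of $[K]$, so the under companion of $[r]$ is $[K]$.

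For the lower bound $rs[K]\geq s[K]-2$, I would start with a minimum stick rail arc $r$ realizing $rs[K]$, with $n=rs[K]$ sticks. By definition either the over or under companion of $[r]$ equals $[K]$; assume the over case (the other is symmetric, via reversing signs of $y$-coordinates). Choose coordinates so the rails are parallel to the $y$-axis, let $H,T$ be the endpoints of $r$, and set $y_{\max}=\max\{y(p):p\in r\}$. I would close $r$ with a two-stick arc $\alpha=Hm\cup mT$ where $m=(m_x,m_y,m_z)$ is chosen generically with $\pi(m)\neq\pi(T)$ and $m_y$ much larger than $y_{\max}$. The finitely many transverse intersections of $\pi(Hm)$ and $\pi(mT)$ with $\pi(r)$ occur at parameters bounded away from the endpoints of the sticks, so for $m_y$ above a finite threshold the $y$-coordinate along each stick of $\alpha$ strictly exceeds $y_{\max}$ at every such crossing. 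Then $\alpha$ projects to an overpass of $\kappa$, and $r\cup\alpha$ is a stick knot on $n+2$ sticks representing the over companion of $[r]$, namely $[K]$, so $s[K]\leq n+2$.

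The hardest part will be verifying that the set of good directions $\vec d$ in the upper bound is nonempty, i.e., that $\vec d$ can be chosen near $v_1-v_0$ so that $\pi(e_0)$ stays crossingless, $\pi(K\setminus e_0)$ is a regular projection, $\ell_0\neq\ell_1$, and the rails are disjoint from the interior of $r$. Each requirement excludes a closed, lower-dimensional subset of directions, so intersecting the open complements with a sufficiently small neighborhood of $v_1-v_0$ yields the desired $\vec d$ via a standard transversality argument. The lower bound construction is routine by comparison, once $m_y$ is taken large enough.
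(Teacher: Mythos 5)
Your proof is correct and takes essentially the same approach as the paper: the upper bound by deleting one stick of a minimal stick knot and projecting along a direction so close to that stick's direction that its image carries no crossings (the paper projects the stick to a point and then perturbs), and the lower bound by capping a minimal rail arc with an explicit two-stick overpass/underpass and counting (the paper's two added sticks run nearly parallel to the rails in the plane the rails span, while yours meet at an apex far along the rail direction, but the idea is identical). One small correction to your transversality discussion: the set of directions $\vec d$ for which $\pi(e_0)$ crosses the projection of another stick $e$ is two-dimensional in $S^2$, not lower-dimensional (it is the image of $e_0\times e$ under the direction map), so you should first generically perturb the vertices of $K$ --- preserving the stick count and knot type --- so that the line through $v_0,v_1$ misses every other stick; then this bad set is compact and avoids $\vec d_0$, and your ``sufficiently small neighborhood of $v_1-v_0$'' argument goes through.
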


 \begin{proof}
 
 \begin{figure}[h]

\begin{overpic}[unit=.4mm,scale=.6]{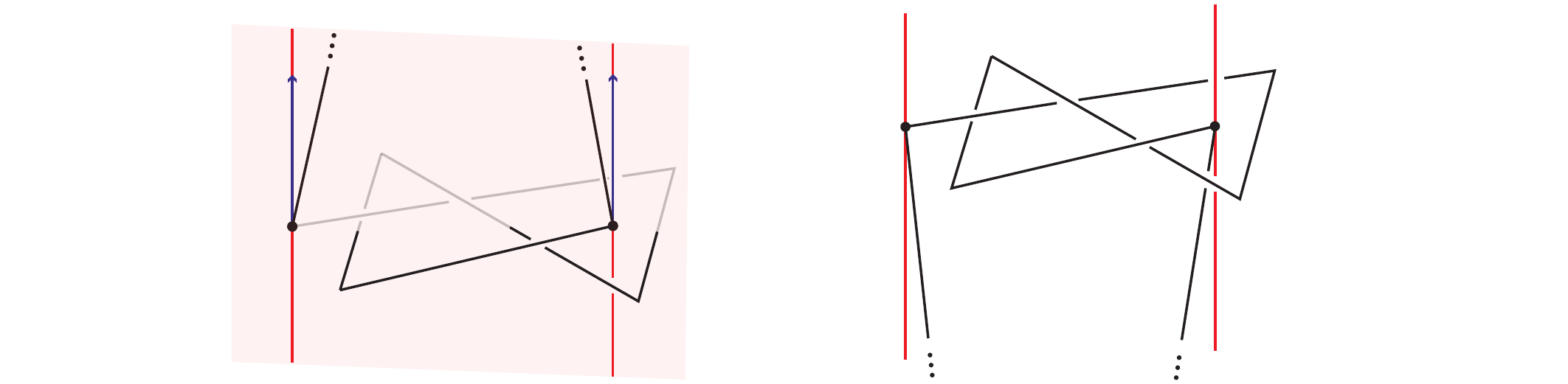}\put(95,7){$P$}\put(52,48){$r_1$}\put(129,45){$r_2$}\put(68,55){$L_1$}\put(112,53){$L_2$}

\end{overpic}

\caption{Two stick over and under passes.}
\label{fig:companion}
\end{figure}

Let $r$ be a stick rail arc. Consider the plane $P$ containing both rails. Let $r_1$ and $r_2$ denote the rays that begin at the head and tail of $r$ and run along the rails in the positive direction. Create an overpass by first adding a stick $L_1\subset P$ starting at the head. Further suppose that $r_1$ and $L_1$ meet at an arbitrarily small angle in $P$ and $L_1$ skews in the direction of the other rail. Include another stick $L_2\subset P$ starting at the tail of the rail arc such that its angle with $r_2$ in $P$ is arbitrarily small and $L_2$ skews in the direction of the other rail. Since $L_1$ and $L_2$ are not parallel in $P$ they will intersect if made sufficiently large. Take $L_1$ and $L_2$ to have a length such that they meet at a point above the rail arc. With their angles against the rails arbitrarily small, the union of $L_1$ and $L_2$ is an overpass, see Figure \ref{fig:companion}. A similar argument shows that there exists a two stick underpass.

Suppose that $r$ realizes the rail stick number of its over (or under) companion. If the stick number of $r$ were to be less than $s[K]-2$, then the addition of the previously constructed two stick overpass (or underpass) creates a companion stick knot with fewer sticks than its stick number. This contradiction implies that $s[K]-2\leq rs[K]$.

 Now, let $K$ be a stick knot realizing the stuck number of its class $[K]$. Consider a projection of $K$ such that a stick $\mathcal{N} \subset K$ is projected to a point. Take a small perturbation of this projection to get a knot diagram with $\mathcal{N}$ barely visible. Since this perturbation can be taken arbitrarily close to the projection eliminating $\mathcal{N}$, it can be assumed that $\mathcal{N}$ contains no crossings in the diagram. Remove $\mathcal{N}$ to get a planar knotoid diagram that lifts to a rail arc whose under and over companion is $[K]$. This stick rail arc has one less stick than the stick number of $[K]$, i.e. $rs[K] \leq s[K]-1.$

 \end{proof}

 \begin{theorem}
 
 The 29 non-starred knot classes in Table \ref{tab:sticknumber} satisfy,
 \[ rs[K]=s[K]-2.\]
 
 \label{thm:knots}
 \end{theorem}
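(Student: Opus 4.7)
The plan is to reduce the statement to a pure existence problem and then describe the geometric strategy for realising the required configurations. By Theorem \ref{thm:companion} we already know that $rs[K]\geq s[K]-2$ holds unconditionally, so it suffices to exhibit, for each of the 29 listed knot classes $[K]$, a single stick rail arc $r$ with exactly $s[K]-2$ sticks whose under companion or over companion is $[K]$. Combined with the general lower bound, this forces equality.

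The main construction I would use is an inverse of the two-stick overpass/underpass trick from the proof of Theorem \ref{thm:companion}. Start from a minimal stick representative $K$ of the knot class, realising the stick number $s[K]$ (these are available in the literature for every knot listed, in particular in \cite{eddy} and references therein). Choose a pair of consecutive sticks $e_1\cup e_2$ inside $K$ with the property that, after a small generic perturbation, there exists a plane $P$ such that the projection of $K$ to $P$ places every crossing between $e_1\cup e_2$ and $K\setminus (e_1\cup e_2)$ with $e_1\cup e_2$ consistently on top, or consistently on the bottom. Removing $e_1\cup e_2$ leaves a p.l.\ arc with $s[K]-2$ sticks whose two endpoints are the free ends of the removed consecutive pair. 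Take the two lines through those endpoints perpendicular to $P$ (which can be rotated into a pair of parallel lines after adjusting the perturbation) as the rails $\ell_1,\ell_2$. The resulting object is a stick rail arc $r$ with $s[K]-2$ sticks, and by construction the two-stick overpass (resp.\ underpass) of Theorem \ref{thm:companion} applied to $r$ recovers precisely the original minimal stick knot $K$. Hence the appropriate companion of $r$ is $[K]$, giving $rs[K]\leq s[K]-2$.

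The implementation is entirely case-by-case: for each of the 29 knots I would give explicit vertex coordinates for a minimal stick knot in which the distinguished pair of consecutive sticks can be played the role of a ``potential overpass'' in some direction. In many low-crossing examples (for instance $3_1$ and $4_1$, where $s[K]$ is already known sharply) the standard minimal stick pictures in Randell, Jin--Kim, and Calvo already contain an obvious pair of consecutive sticks separated from the rest of the knot in a half-space, so the construction is immediate. For higher-crossing knots in the list I would modify the known representatives slightly, either by a perspective change or by a local triangle move, until two consecutive sticks can be isolated above (or below) the rest of the diagram.

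The principal obstacle is precisely the combinatorial rigidity of minimal stick representatives: not every minimal stick knot admits a pair of consecutive sticks that separate from the rest in a linear half-space, and for some knot classes every known minimal representative fails this property. This is almost certainly why the statement restricts to \emph{non-starred} knots in Table \ref{tab:sticknumber}; the starred entries are the cases where such a construction has not been found and only the weaker bound $rs[K]\leq s[K]-1$ from Theorem \ref{thm:companion} is available. The proof therefore comes down to a carefully curated catalogue of 29 explicit stick configurations (or pointers to such configurations in the existing stick-knot literature), each verified to project onto a planar knotoid diagram whose chosen companion is the desired knot.
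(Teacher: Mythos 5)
Your proposal matches the paper's proof: the paper likewise invokes the lower bound of Theorem \ref{thm:companion} and, for the upper bound, projects a minimal stick representative (taken from Scharein's \texttt{KnotPlot} models) so that two consecutive sticks pass under everything they cross, removes them, and lifts the resulting planar knotoid diagram to a stick rail arc with $s[K]-2$ sticks whose under companion is $[K]$; the explicit 29 configurations you defer to a catalogue are exactly what the paper supplies in Figures \ref{fig:table1} and \ref{fig:table2}. The only small discrepancy is your guess about the starred entries: per Remark \ref{rmk:clayton} they are excluded because Scharein's purportedly minimal models for those classes are not actually minimal, not because a suitable pair of consecutive sticks fails to exist.
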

 
 \begin{proof}

Let $K$ be a stick knot realizing the stick number of its class $[K]$. Suppose that $K$ admits a knot diagram with 2 sticks projecting underneath all other sticks that they may intersect. Removing these two sticks produces a planar knotoid diagram that lifts to a rail arc whose under companion is $[K]$. 

Known stick numbers of small crossing knots are given in Table \ref{tab:sticknumber}. This table is from the appendix of \cite{eddy}.  Scharein created 3D models of low crossing stick knots using his \texttt{KnotPlot} software found at \cite{knotplot}. The planar knotoid diagrams of Figures \ref{fig:table1} and \ref{fig:table2}  are created by projecting minimal stick representatives and removing 2 sticks that projected under all intersecting sticks. Therefore, these planar knotoid diagrams lift to stick rail arcs with 2 fewer sticks than their under companions' stick numbers. The result follows from the lower bound of Theorem \ref{thm:companion}. Each planar knotoid diagram is labeled with its under companion's calculated rail stick number. 
 
 \begin{remark}
 As noted by Clayton Shonkwiler, not all of Scharein's purportedly minimal stick knots actually realize their class' stick number, namely the starred knot classes of Table \ref{tab:sticknumber}. Although, the knot classes mentioned in the proof of Theorem \ref{thm:knots} are minimal in the 3D model database of Scharein. 
 
 \label{rmk:clayton}
 \end{remark}

\begin{figure}

\begin{overpic}[unit=.428mm,scale=.6]{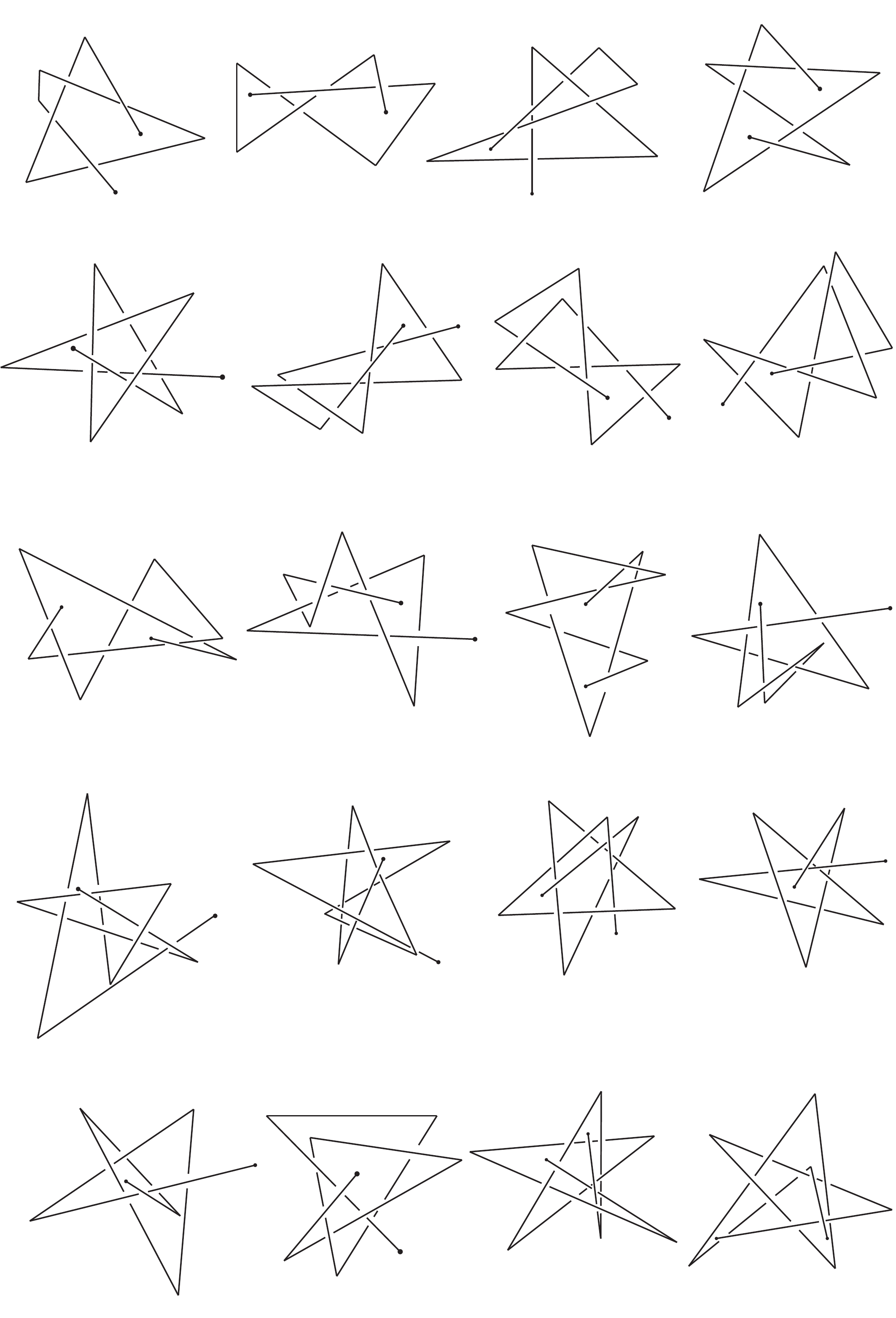}\put(19,438){$rs[5_1]=6$}\put(95,438){$rs[5_2]=6$}\put(174,438){$rs[6_1]=6$}\put(260,438){$rs[6_2]=6$}

\put(19,361){$rs[6_3]=6$}\put(95,361){$rs[7_1]=7$}\put(174,361){$rs[7_2]=7$}\put(260,361){$rs[7_3]=7$}

\put(19,269){$rs[7_4]=7$}\put(95,269){$rs[7_5]=7$}\put(174,269){$rs[7_6]=7$}\put(260,269){$rs[7_7]=7$}

\put(19,181){$rs[8_{16}]=7$}\put(95,181){$rs[8_{17}]=7$}\put(174,181){$rs[8_{18}]=7$}\put(260,181){$rs[8_{19}]=6$}

\put(19,85){$rs[8_{20}]=6$}\put(95,85){$rs[8_{21}]=7$}\put(174,85){$rs[9_{29}]=7$}\put(260,85){$rs[9_{34}]=7$}

\end{overpic}

\caption{Minimal stick representatives realizing their labeled under companion's rail stick number, 1 of 2.}
\label{fig:table1}
\end{figure}

\begin{figure}

\begin{overpic}[unit=.428mm,scale=.6]{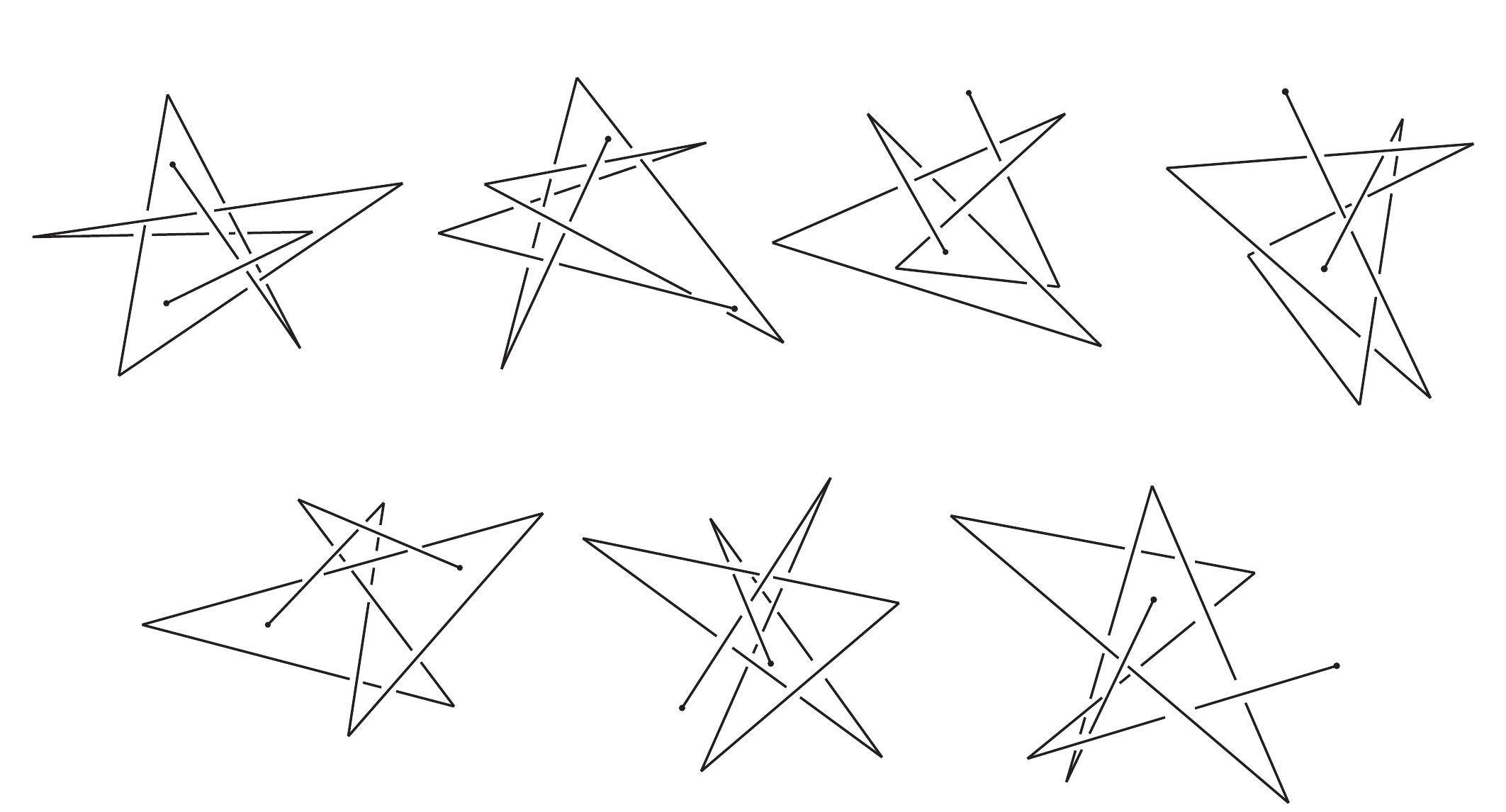}\put(23,152){$rs[9_{40}]=7$}\put(103,152){$rs[9_{41}]=7$}\put(175,152){$rs[8_{42}]=7$}\put(248,152){$rs[9_{44}]=7$}

\put(56,72){$rs[9_{46}]=7$}\put(134,72){$rs[9_{47}]=7$}\put(215,72){$rs[9_{49}]=7$}

\end{overpic}

\caption{Minimal stick representatives realizing their labeled under companion's rail stick number, 2 of 2.}
\label{fig:table2}
\end{figure}

 \end{proof}

\begin{table}
\setlength{\tabcolsep}{10pt} 
\renewcommand{\arraystretch}{1.5}
\raggedright
\begin{tabular}{|c||c |c |c |c| c| c |c |c |c| c| c| c| c| c| c| c| c| c| c| c| c| c|c|c|c|c|c|c|}

\hline 
$[K]$ & $3_1$ & $4_1$ & $5_1$ & $5_2$ & $6_1$ & $6_2$ & $6_3$ & $7_1$ &  $7_2$& $7_3$  \\ \hline 
$s[K]$ & $6$ & $7$ & $8$ & $8$ & $8$ & $8$ & $8$ & $9$ &  $9$& $9$  \\ \hline

\end{tabular}

\vspace{5mm}
\raggedright

\begin{tabular}{|c||c |c |c |c| c| c |c |c |c| c| c| c| c| c| c| c| c| c| c| c| c| c|c|c|c|c|}

\hline 
$[K]$ &$7_4$ &$7_5$& $7_6$ & $7_7$ & $8_{16}$ & $8_{17}$ & $8_{18}$ & $8_{19}$ & $8_{20}$  \\ \hline
$s[K]$ &$9$ &$9$& $9$ & $9$  
& 9&  $9$ & $9$ & $8$ & $8$  \\ \hline

\end{tabular}

\vspace{5mm}
\raggedright

\begin{tabular}{|c||c |c |c |c| c| c |c |c |c| c| c| c| c| c| c| c| c|c|c|c|c|}

\hline 
$[K]$ & $8_{21}$ & $9_{29}$ & $9_{34}$ & $9_{35}^*$ &  $9_{39}^*$& $9_{40}$ &$9_{41}$ &$9_{42}$    \\ \hline
$s[K]$ & $9$ & $9$ & $9$ & $9$ &  $9$& $9$ &$9$ &$9$  \\ \hline

\end{tabular}
\vspace{5mm}

\raggedright

\begin{tabular}{|c||c |c |c |c| c| c |c |c |c| c| c| c| c| c| c| c| c|c|c|c|c|}

\hline 
$[K]$  & $9_{43}^*$ & $9_{44}$ & $9_{45}^*$ & $9_{46}$ & $9_{47}$ & $9_{49}$  \\ \hline
$s[K]$ & $9$ & $9$ & $9$ & $9$ & $9$ & $9$   \\ \hline

\end{tabular}
\vspace{5mm}

\caption{Stick numbers of some small crossing knots \cite{eddy}.}\label{tab:sticknumber}
\end{table}

Turaev also introduced knotoids with additional circle components in his seminal publication  \cite{turaev2012knotoids}.

\begin{definition}

A {\it planar multi-knotoid diagram} is a generic immersion of the unit interval and several copies of $S^1$ in the plane endowed with crossing information. Reidemeister moves away from the boundary points and planar isotopy generate an equivalence relation on planar multi-knotoid diagrams with classes called {\it planar multi-knotoids}. The {\it under (over) companion} of a planar multi-knotoid is the link class defined by taking a planar multi-knotoid diagram representative and connecting the head and tail by a generic simple arc that is given under  (over) crossing information whenever it meets the diagram.

\end{definition}

\begin{definition}

A rail arc with a link embedded in the complement of the rails and arc is called a {\it multi-component rail arc}. Two multi-component rail arcs are equivalent if there exists a rail isotopy taking one to the other. The equivalence classes are called {\it rail isotopy classes}. 

\end{definition}

Planar multi-knotoids uniquely lift to rail isotopy classes of multi-component rail arcs, and rail isotopy classes of multi-component rail arcs uniquely project to planar multi-knotoids in a plane perpendicular to the rails. Therefore, there is a correspondence between planar multi-knotoids under the relation of Reidemeister moves away from the tail and head, and rail isotopy classes of multi-component rail arcs.

\begin{definition}

A {\it multi-component stick rail arc} is a stick rail arc with additional stick knot components in the complement of the arcs and rails. 

\end{definition}

\begin{definition}

The {\it stick number} of a multi-component rail arc class is the minimum number of sticks needed to create a p.l. representative.

\end{definition}

The rail stick number of knots is now generalized to links.

\begin{definition}

The {\it rail stick number} of a p.l. link class $[L]$ is the minimum number of sticks needed to create a multi-component stick rail arcs whose under or over companion is $[L]$.

\end{definition}

\begin{figure}[h]

\begin{overpic}[unit=.4mm,scale=.6]{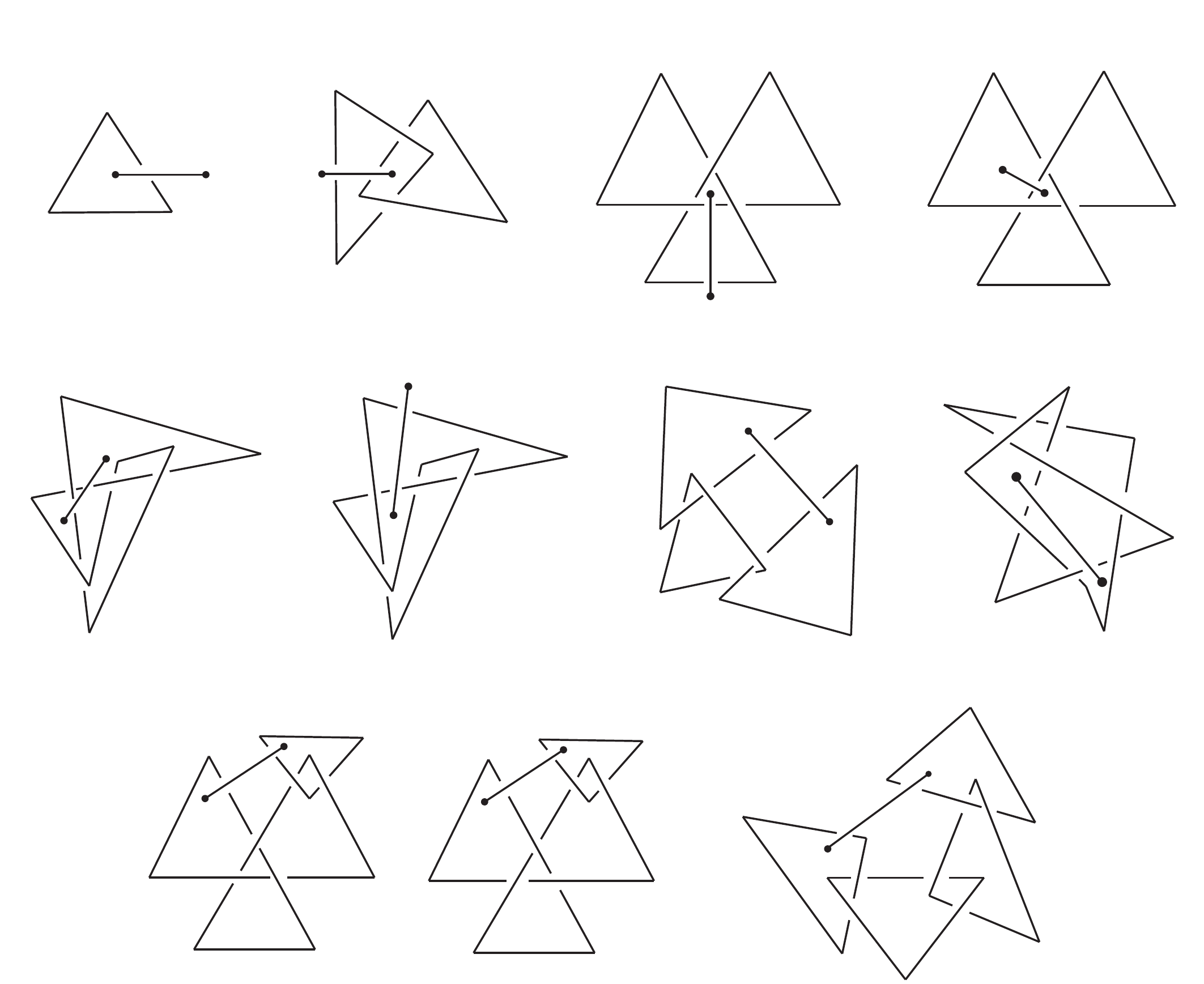}\put(13,250){$rs[L2a1]=4$}\put(89,250){$rs[L6n1]=7$}\put(172,250){$rs[L7n1]=7$}\put(261,250){$rs[L7n2]=7$}

\put(14,167){$rs[L8n1]=8$}\put(98,167){$rs[L8n2]=8$}\put(179,167){$rs[L8n8]=10$}\put(263,167){$rs[L9n7]=9$}

\put(46,75){$rs[L9n20]=10$}\put(122,75){$rs[L9n21]=10$}\put(195,75){$rs[L10n113]=13$}

\end{overpic}

\caption{Multi-knotoids in the plane that lift to multi-component stick rail arcs realizing their stick numbers and the rail stick numbers of their under companions.}\label{fig:multistick}
\end{figure}

\begin{table}[h]
\setlength{\tabcolsep}{10pt} 
\renewcommand{\arraystretch}{1.5}
\raggedright
\begin{tabular}{|c||c |c |c |c| c| c |c |c |c| c| c| c| c| c| c| c| c| c| c| c| c| }

\hline 
$[L]$ & $L2a1$ & $L6n1$ & $L7n1$ & $L7n2$ & $L8n1$ & $L8n2$ & $L8n8$ & $L9n7$     \\ \hline

$rs[L]$ & 4& 7& 7& 7& 8& 8& 10& 9 

 \\ \hline

$s[L]$ & 6& 9& 9& 9& 10& 10& 12& 11 

\\\hline

\end{tabular}

\vspace{5mm}
\raggedright

\begin{tabular}{|c||c |c |c |}

\hline 
$[L]$ &  $L9n20$ & $L9n21$& $L10n113$    \\ \hline

$rs[L]$ & 10& 10& 13 \\\hline

$s[L]$ & 12& 12& 15 \\\hline

\end{tabular}
\vspace{5mm}

\caption{Rail stick numbers of several links.}\label{tab:links}
\end{table}

\begin{theorem}

Figure \ref{fig:multistick} shows 11 multi-component stick rail arcs that realize their stick numbers and Table \ref{tab:links} gives the rail stick numbers of their under companions.

\end{theorem}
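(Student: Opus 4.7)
The plan is to extend Theorem \ref{thm:companion} to the multi-component setting and then combine the upper bounds supplied by Figure \ref{fig:multistick} with lower bounds coming from the known stick numbers of the listed links. First, I would observe that the two-stick overpass construction of Figure \ref{fig:companion} generalizes verbatim to multi-component rail arcs: the additional link components sit in the complement of the rails and of the arc, so after a small generic perturbation they can be arranged disjoint from the new overpass sticks $L_1, L_2$ without changing the isotopy class of the link. Repeating the two short arguments of Theorem \ref{thm:companion} then yields the link-level inequality
\[ s[L] - 2 \;\leq\; rs[L] \;\leq\; s[L] - 1 \]
for every link class $[L]$.

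Next I would read off the upper bounds directly from Figure \ref{fig:multistick}: each panel exhibits a planar multi-knotoid diagram that lifts to a multi-component stick rail arc, with the indicated number of sticks, whose under companion is the corresponding link class $[L]$. This immediately gives $rs[L] \leq n_L$, where $n_L$ is the labeled stick count. Since the values $s[L]$ recorded in Table \ref{tab:links} all satisfy $s[L] = n_L + 2$, the lower bound from the generalized Theorem \ref{thm:companion} forces $rs[L] = n_L$, which establishes the second half of the theorem.

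For the first half, that each of the 11 pictured multi-component rail arcs realizes its own stick number, I would argue by contradiction. If some $r_L$ admitted a representative using strictly fewer than $n_L$ sticks, then attaching the two-stick overpass from Figure \ref{fig:companion} would produce a stick link representative of the under companion $[L]$ using fewer than $s[L] = n_L + 2$ sticks, contradicting the tabulated value of $s[L]$. Hence $s[r_L] = n_L$ as claimed.

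The main obstacle will be the sourcing of the link stick numbers $s[L]$ themselves: the proof is only as strong as the external results that pin down those 11 values, and several of the higher-crossing entries will need careful citation. A secondary technical check is, link by link, that the pictured planar multi-knotoid really has the claimed link as its under companion, which amounts to tracing crossings in each panel. The multi-component overpass construction itself is a routine genericity argument, since the plane containing the rails has measure zero in $\mathbb{R}^3$ and the extra link components can be pushed transversely off any chosen finite subcomplex.
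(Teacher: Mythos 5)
Your proposal is correct, and its treatment of the rail stick numbers coincides with the paper's: both get the upper bound $rs[L]\leq n_L$ from the representatives in Figure \ref{fig:multistick} and the lower bound $s[L]-2\leq rs[L]$ by extending the two-stick overpass of Theorem \ref{thm:companion} to the multi-component setting, then close the gap using the tabulated values $s[L]=n_L+2$. Where you genuinely diverge is in proving that the eleven pictured arcs realize their own stick numbers. The paper does this with a component-wise additivity bound, $s[r]+\sum_i s[K_i]\leq s[\mu]$, observing that each pictured arc consists of a one-stick trivial rail arc together with knot components (unknot, trefoil, figure-eight, $5_2$) that individually realize their classical stick numbers $3$, $6$, $7$, $8$; this lower bound needs only well-established knot stick numbers and is independent of the link data. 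You instead recycle the companion inequality: a sub-$n_L$ representative of $\mu$ would yield a stick link of $[L]$ with fewer than $s[L]$ sticks. Your route is more uniform (one inequality does all the work) but places the entire burden on the link stick numbers $s[L]$, which, as you rightly flag, are the least well-sourced inputs; the paper's additivity argument hedges against exactly that dependence for the first half of the statement. One small correction to your genericity remark: making the extra link components \emph{disjoint} from $L_1$ and $L_2$ in $\mathbb{R}^3$ is automatic and is not the issue; what must be arranged is that $L_1\cup L_2$ passes \emph{over} (or under) those components in the projection, which is achieved as in Figure \ref{fig:companion} by letting $L_1$ and $L_2$ run far enough in the positive rail direction to clear the compact region containing all of $\mu$, not by a transversality perturbation of the components.
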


\begin{proof}

Suppose that $\mu$ is a multi-component stick rail arc with  rail arc component $r$ and knot components $K_i$ for $i=1,\dots, n$. Then,  \begin{equation} s[r] + \sum_{i=1}^n s[K_i]\leq s[\mu].  \label{stick1}\end{equation} In each of the 11 multi-component stick rail arcs of Figure \ref{fig:multistick}, the rail arc component and each knot component realizes its class' stick number. Each rail arc component is trivial and made with a single stick. The knot components are the unknot with 3 sticks, the trefoil with 6 sticks, the figure eight with 7 sticks, and the $5_2$ with 8 sticks. Since no component could be made with fewer sticks, the inequality (\ref{stick1}) is sharp on these multi-component stick rail arc classes.

 Suppose further that $[L]$ is a  companion of $\mu$.  From the proof of Theorem \ref{thm:companion}, there is a two stick underpass (or overpass) that turns $\mu$ into a stick knot representative of $[L]$. Therefore, \begin{equation} s[L]-2 \leq rs[L]\leq s[\mu]. \label{multirail}\end{equation} Since each multi-component stick rail arcs that the planar multi-knotoid diagrams in Figure \ref{fig:multistick} lift to have two fewer sticks than its under companion's stick number, labeled in the figure, the inequality (\ref{multirail}) is an equality on all 11 link classes shown in Table \ref{tab:links}.
\end{proof}

Figure \ref{fig:multifamily2} gives an infinite family of multi-component stick rail arcs that realizes the rail stick numbers of their under companions.

\begin{figure}[h]

\includegraphics[scale=.6]{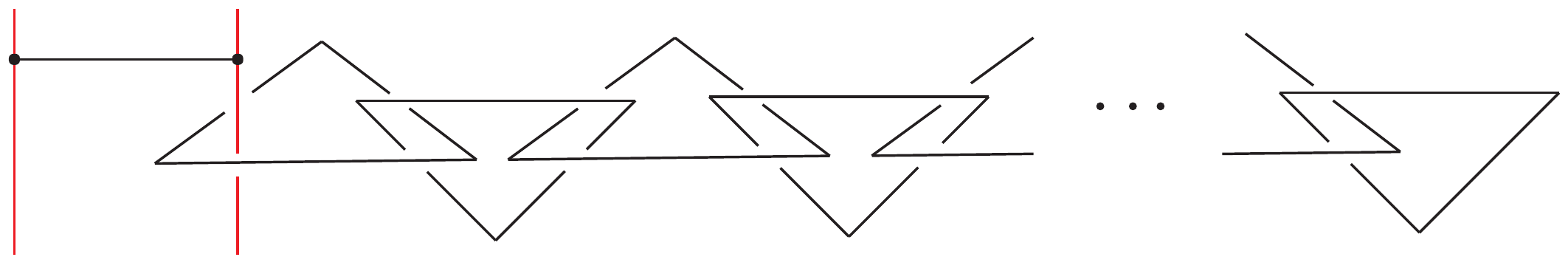}

\caption{Multi-component rail arc with $n$ unknot components and a stick number of $3n+1$. }
\label{fig:multifamily2}
\end{figure}

\section{Rail Arcs in the Simple Cubic Lattice}
\label{cubic}

The definitions in the previous section can be reformulated in the cubic lattice.

\begin{definition}

The {\it rail lattice stick number} of a knot class $[K]$ is the minimum number of sticks needed to create a lattice rail arc whose under or over companion is $[K]$. The rail lattice stick number is denoted $rs_{CL}[K]$.

\end{definition}

\begin{theorem}

For any knot $K$, \[ s_{CL}[K]-4\leq rs_{CL}[K].\]

\label{thm:cubic}
\end{theorem}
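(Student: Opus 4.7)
The plan is to adapt the proof of Theorem \ref{thm:companion} to the cubic lattice, replacing the two-stick angled overpass with a rectilinear overpass of at most four sticks in $\mathbb{L}^3$. Concretely, I aim to show that any lattice rail arc realizing $rs_{CL}[K]$ can be closed into a lattice knot of class $[K]$ using at most four additional sticks, which yields $s_{CL}[K]\leq rs_{CL}[K]+4$ and hence the claim.

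Let $r$ be a lattice rail arc with exactly $rs_{CL}[K]$ sticks whose over companion is $[K]$; the under-companion case is symmetric. Parallel lines contained in $\mathbb{L}^3$ are axis-aligned, so I may take the rails to be $\ell_i=\{(x_i,y_i)\}\times\mathbb{R}$ for $i=1,2$, with head $(x_1,y_1,z_H)\in\ell_1$ and tail $(x_2,y_2,z_T)\in\ell_2$. Fix an integer $M$ strictly greater than every $z$-coordinate attained by $r$.

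The closing path I would add from the head to the tail consists of up to four lattice sticks: a $+z$-directed stick up $\ell_1$ to $(x_1,y_1,M)$; an $x$-directed stick at height $M$ to $(x_2,y_1,M)$; a $y$-directed stick at height $M$ to $(x_2,y_2,M)$; and a $-z$-directed stick down $\ell_2$ to the tail (the second or third stick is omitted if $x_1=x_2$ or $y_1=y_2$). Rail-disjointness of the arc's interior forces the first and last sticks to meet $r$ only at the head and tail, while the two horizontal sticks lie strictly above $r$ by choice of $M$, so the union is a simple closed lattice polygon; since the horizontal portion is above every crossing in the projection along $z$, it forms an overpass, and the resulting lattice knot represents the over companion $[K]$. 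To finish I would note that no collinearity collapses the stick count: rail-disjointness forces the arc's sticks adjacent to the head and tail to be $\pm x$- or $\pm y$-directed (a $\pm z$-directed terminal stick would lie on a rail), so neither is collinear with the $\pm z$-directed ends of the closing path, and the closing sticks are pairwise perpendicular by construction. Hence the closed polygon has at most $rs_{CL}[K]+4$ sticks, giving $s_{CL}[K]\leq rs_{CL}[K]+4$. The only delicate point is the disjointness of the closing path from $r$, and this is immediate once $M$ is chosen large and rail-disjointness is invoked; I therefore expect the argument to be short and essentially parallel to that of Theorem \ref{thm:companion}.
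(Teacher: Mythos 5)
Your construction is essentially the paper's own proof: the paper also closes the lattice rail arc with two sticks running along the rails to a common height (plane perpendicular to the rails) plus at most two sticks in that plane, producing a four-stick underpass or overpass and hence the bound $s_{CL}[K]\leq rs_{CL}[K]+4$. Your added remarks on choosing $M$ large and on non-collinearity are fine but only make explicit details the paper leaves implicit.
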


\begin{figure}[h]

\includegraphics[scale=.6]{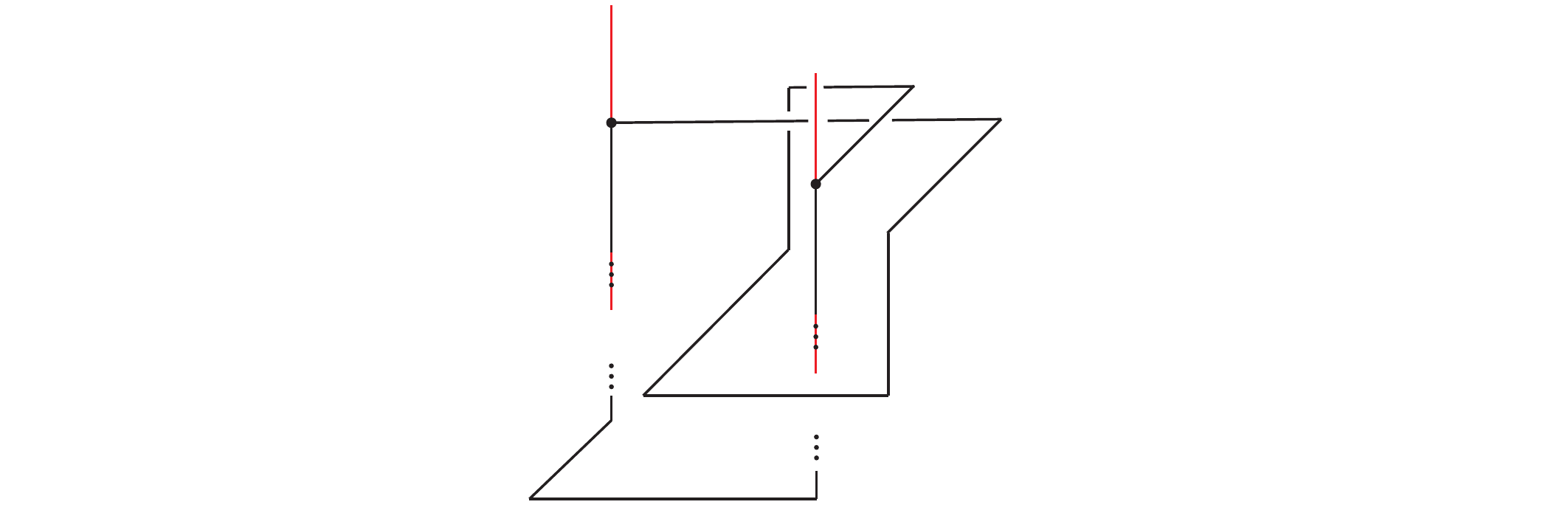}

\caption{Maximal stick underpass of a lattice rail arc.}
\label{fig:cubiccompanion}
\end{figure}

\begin{proof}

For any lattice rail arc $r$, an under companion can be created by first adding two sticks $L_1$ and $L_2$ starting at the tail and the head of $r$ and transversing arbitrarily far down the rails. Let these sticks end in the same plane perpendicular to the rails. In this plane, at most two sticks are needed to connect the endpoints of $L_1$ and $L_2$. This creates an underpass comprised of four sticks, see Figure \ref{fig:cubiccompanion}. Likewise, an overpass comprised of four sticks can be created. Therefore, $r$ cannot have fewer sticks than four less the lattice stick numbers of its companions.

\end{proof}

\begin{remark}

The inequality of Theorem \ref{thm:cubic} is not sharp on all knot classes. The rail lattice stick number of the unknot $[U]$ is 1 while $s_{CL}[U]-4=0$.

\end{remark}

\begin{corollary}

The rail lattice stick number of $3_1$ is 8, $rs_{CL}[4_1]=10$, \[rs_{CL}[8_{20}]=rs_{CL}[8_{21}]=rs_{CL}[9_{46}]=14,\] and $rs_{CL}[\ \text{$(p,p+1)$-torus knot} \ ]=6p-4$ for any integer $p\geq 2$. The lattice stick number of the rail arc classes $2_1$ and $2_4$ are 8 and 10.

\end{corollary}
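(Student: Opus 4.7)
The lower bounds are immediate from Theorem \ref{thm:cubic}. Combined with the previously cited values $s_{CL}[3_1]=12$, $s_{CL}[4_1]=14$, $s_{CL}[8_{20}]=s_{CL}[8_{21}]=s_{CL}[9_{46}]=18$, and $s_{CL}[(p,p{+}1)\text{-torus knot}]=6p$ of Diao, Huh--Oh, Promislow--Rensburg, and Adams et al., one reads off $rs_{CL}[3_1]\geq 8$, $rs_{CL}[4_1]\geq 10$, $rs_{CL}[8_{20}],rs_{CL}[8_{21}],rs_{CL}[9_{46}]\geq 14$, and $rs_{CL}[(p,p{+}1)\text{-torus knot}]\geq 6p-4$.

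The plan for the matching upper bounds is to exhibit, for each knot in the list, an explicit lattice rail arc with the prescribed number of sticks whose under companion is the knot. The construction is the inverse of the argument in Theorem \ref{thm:cubic}: starting from a published minimal lattice stick representative of the knot, I would locate four consecutive sticks forming an underpass of the type built in that proof --- two long sticks parallel to the rail direction joined by at most two short sticks in a perpendicular lattice plane, each of them projecting under the rest of the knot wherever they intersect it. Deleting these four sticks and extending the two outer remaining sticks into full parallel rails leaves a lattice rail arc of the correct stick count whose under companion is the original knot. For the torus knot family, I would appeal to the explicit periodic $6p$-stick model of Adams et al., whose rotational structure exhibits such a four-stick underpass uniformly in $p\geq 2$, producing lattice rail arcs with $6p-4$ sticks. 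For $3_1$, $4_1$, $8_{20}$, $8_{21}$, and $9_{46}$ the construction is carried out directly in the known minimal models, and the resulting lattice rail arcs are displayed in a figure.

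The claims for the rail arc classes $2_1$ and $2_4$ reduce to the preceding. The under companion of $2_1$ is the trefoil and the under companion of $2_4$ is the figure-eight knot, so the defining inequality $rs_{CL}[K]\leq s_{CL}[r]$ for any lattice rail arc $[r]$ with companion $[K]$ forces $s_{CL}[2_1]\geq rs_{CL}[3_1]=8$ and $s_{CL}[2_4]\geq rs_{CL}[4_1]=10$. The matching upper bounds are essentially free: the eight-stick lattice rail arc constructed above that realises $rs_{CL}[3_1]$ is of rail arc class $2_1$, and the ten-stick lattice rail arc constructed for $rs_{CL}[4_1]$ is of class $2_4$, as can be read directly off their planar knotoid projections perpendicular to the rails.

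The main obstacle is the case-by-case combinatorial verification that a four-stick lattice underpass of the required shape actually sits inside each of the specific known minimal configurations, particularly for $8_{20}$, $8_{21}$, and $9_{46}$, whose minimal lattice realizations are intricate. In some cases one may first need to pass through a p.l. isotopy that rearranges the minimal model without changing the stick count, so that the underpass becomes visible as four consecutive sticks in the canonical position. The torus-knot case is the cleanest, since the rotational symmetry in the Adams et al. $6p$-stick construction exhibits such an underpass in every period by inspection.
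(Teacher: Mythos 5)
Your proposal follows essentially the same route as the paper: lower bounds from Theorem \ref{thm:cubic} together with the known values of $s_{CL}$, upper bounds by deleting a four-stick underpass from the published minimal lattice models (the paper simply exhibits the resulting lattice rail arcs in Figures \ref{fig:cubiccompanion}, \ref{fig:fig8cubic}, \ref{fig:8_20} and \ref{fig:torus}), and the $2_1$, $2_4$ claims via the inequality $rs_{CL}[K]\leq s_{CL}[r]$ applied to those same representatives. The case-by-case verification you flag as the remaining obstacle is handled in the paper only by displaying the explicit configurations, so your argument is at the same level of detail as the original.
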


\begin{proof}

The rail lattice stick numbers follow from Theorem \ref{thm:cubic} and Figures \ref{fig:cubiccompanion}, \ref{fig:fig8cubic}, \ref{fig:8_20} \& \ref{fig:torus}. For any rail arc class $[r]$ with companion $[K]$,  \[ rs_{CL}[K]  \leq s_{CL}[r].\] Thus, the lattice stick numbers of $2_1$ and $2_4$ follow from this inequality and the representatives in Figures \ref{fig:cubiccompanion} and \ref{fig:fig8cubic}.

\end{proof}

\begin{figure}[h]

\includegraphics[scale=.6]{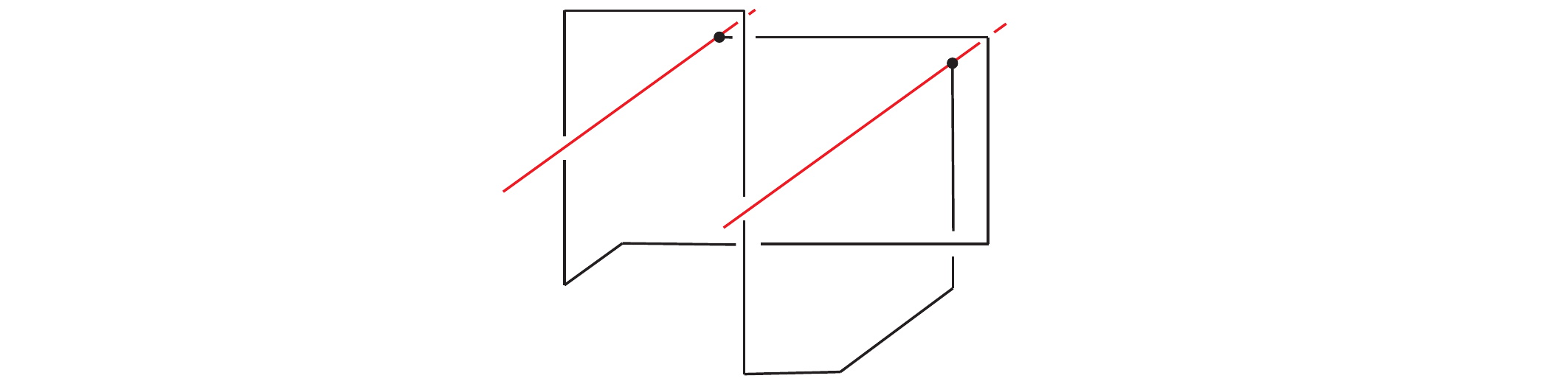}

\caption{Minimal stick representative of a lattice rail arc whose over companion is $4_1$, see \cite{Huh_2010}.}
\label{fig:fig8cubic}
\end{figure}

\begin{figure}[h]

\includegraphics[scale=.6]{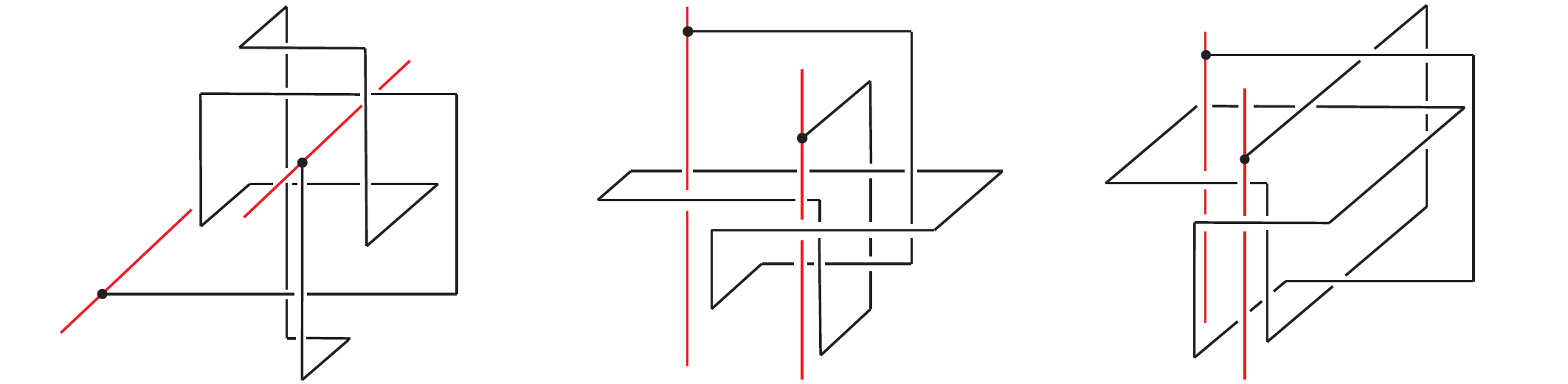}

\caption{Minimal stick representatives of lattice rail arcs whose under companions are $8_{20}$, $8_{21}$, and $9_{46}$, see \cite{adams}.  }

\label{fig:8_20}
\end{figure}

\begin{figure}[h]

\includegraphics[scale=.6]{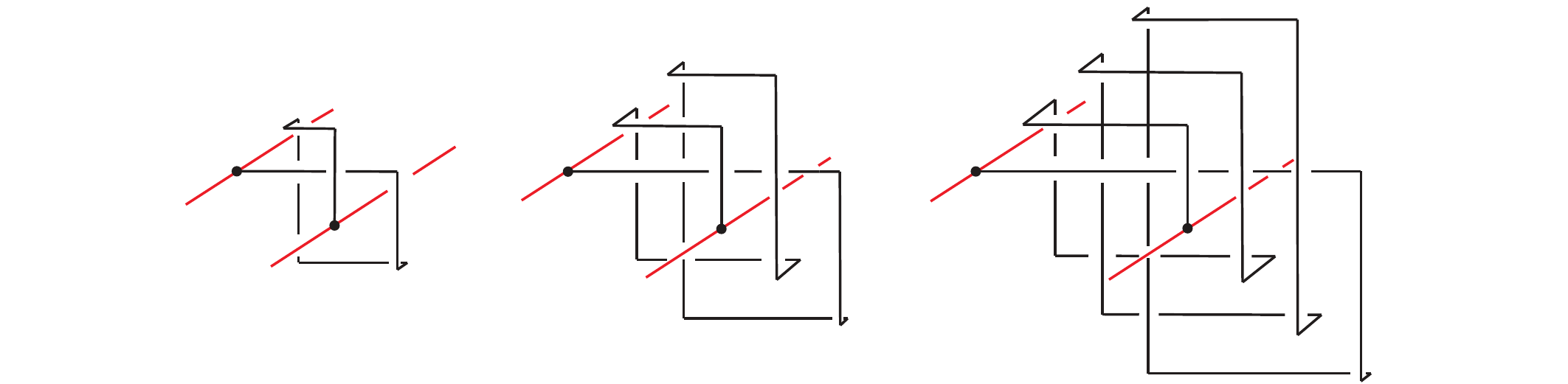}

\caption{Minimal stick representatives of lattice rail arcs whose under companions are the $(p,p+1)$-torus knots, see \cite{adams}.}
\label{fig:torus}
\end{figure}

\begin{theorem}

Figure \ref{fig:cubiclinks} shows 6 multi-component lattice rail arcs that realize their lattice stick numbers and Table \ref{tab:cubiclinks} gives the rail lattice stick numbers of their under companions.

\end{theorem}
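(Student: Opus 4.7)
The plan is to mirror the proof structure used in the Euclidean case (the theorem around Figure \ref{fig:multistick} and Table \ref{tab:links}), replacing the ambient stick numbers $s[\cdot]$ with their lattice analogues $s_{CL}[\cdot]$ and substituting Theorem \ref{thm:cubic} for Theorem \ref{thm:companion}. First I would record the lattice version of the component inequality: if $\mu$ is a multi-component lattice rail arc with rail arc component $r$ and knot components $K_1,\dots,K_n$, then
\begin{equation}
s_{CL}[r] + \sum_{i=1}^{n} s_{CL}[K_i] \leq s_{CL}[\mu],
\label{latticestick}
\end{equation}
since every stick of $\mu$ belongs to exactly one component and each component is itself a lattice representative of its class.

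Second, I would verify that each of the 6 configurations in Figure \ref{fig:cubiclinks} is componentwise minimal. Each lattice rail arc component should be the trivial rail arc realized by a single stick lying in the plane of the rails, and each knot component should match the known lattice stick numbers recalled in the introduction: $s_{CL}[U]=3$ (or the appropriate small value for an unknot under the convention used), $s_{CL}[3_1]=12$, $s_{CL}[4_1]=14$, $s_{CL}[8_{20}]=s_{CL}[8_{21}]=s_{CL}[9_{46}]=18$, and $s_{CL}[\text{$(p,p+1)$-torus}]=6p$. Equality in \eqref{latticestick} is then forced on the displayed representatives, proving they realize $s_{CL}[\mu]$.

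Third, I would derive the rail lattice stick numbers of the under companions. For any companion link $[L]$ of such a $\mu$, the four-stick underpass construction of Theorem \ref{thm:cubic} (see Figure \ref{fig:cubiccompanion}), applied in the plane containing the rails far enough out to avoid all knot components, extends $\mu$ to a lattice representative of $[L]$ using exactly $s_{CL}[\mu] + 4$ sticks. Combined with the lower bound of Theorem \ref{thm:cubic} generalized to links, this yields
\[
s_{CL}[L] - 4 \;\leq\; rs_{CL}[L] \;\leq\; s_{CL}[\mu] \;=\; s_{CL}[L]-4,
\]
where the last equality comes from selecting $\mu$ with $s_{CL}[\mu] = s_{CL}[L]-4$, which is guaranteed by choosing the configurations in Figure \ref{fig:cubiclinks} so that each attached four-stick underpass fills in to a known minimal-lattice representative of $[L]$ found in the cited references. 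The equalities in Table \ref{tab:cubiclinks} follow directly.

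The main obstacle is the bookkeeping to ensure that for each of the 6 links the displayed $\mu$ really is obtained from a known minimal lattice diagram of $[L]$ by deleting a suitably placed four-stick underpass; without this matching, \eqref{latticestick} only gives $rs_{CL}[L] \leq s_{CL}[\mu]$ and not the sharp value. Verifying componentwise minimality is routine given the cited lattice stick number results, but confirming that none of the four removed sticks could have been spared---e.g. by a clever lattice move that replaces two sticks of the underpass by a single stick sharing a direction with a component stick---requires checking that the four-stick underpass of Figure \ref{fig:cubiccompanion} is itself minimal among lattice underpasses joining two rails with prescribed endpoints and interior disjoint from the knot components, which is exactly the content of the lower bound $s_{CL}[L]-4 \leq rs_{CL}[L]$.
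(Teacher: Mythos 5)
Your first two steps match the paper's argument: the componentwise lower bound $s_{CL}[r]+\sum_i s_{CL}[K_i]\leq s_{CL}[\mu]$ together with componentwise-minimal representatives establishes that the displayed configurations realize their lattice stick numbers, and the four-stick underpass from Theorem \ref{thm:cubic} gives $s_{CL}[L]-4\leq rs_{CL}[L]\leq s_{CL}[\mu]$. (One small correction there: a closed lattice polygon needs at least $4$ sticks, so $s_{CL}[U]=4$, not $3$; this number matters for the componentwise count below.)

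The genuine gap is in your third step. You assert that each displayed $\mu$ satisfies $s_{CL}[\mu]=s_{CL}[L]-4$, so that the chain $s_{CL}[L]-4\leq rs_{CL}[L]\leq s_{CL}[\mu]$ collapses to an equality. That is true for $L7n1$, $L8n2$, and $L8a21$, but it fails for three of the six links in Table \ref{tab:cubiclinks}: for $L2a1$ we have $rs_{CL}=5$ while $s_{CL}-4=4$; for $L6a4$ and $L6n1$ we have $rs_{CL}=9$ while $s_{CL}-4=8$. For these links the lower bound from Theorem \ref{thm:cubic} is strictly smaller than the number of sticks in the displayed representative, so your argument only gives $rs_{CL}[L]\leq s_{CL}[\mu]$ without pinning down the value. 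The paper closes this gap with a second, componentwise lower bound: if $[L]$ has $n$ components, each unknotted, then \emph{any} multi-component lattice rail arc whose companion is $[L]$ consists of one rail arc component (at least $1$ stick) and $n-1$ unknotted lattice polygons (at least $4$ sticks each), whence $4n-3\leq rs_{CL}[L]$. This gives $5$ for $L2a1$ ($n=2$) and $9$ for $L6a4$ and $L6n1$ ($n=3$), matching the upper bounds from the displayed representatives. Without this additional bound, half of the entries in Table \ref{tab:cubiclinks} are not justified.
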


\begin{proof}

Suppose that $\mu$ is a multi-component lattice rail arc with  rail arc component $r$ and knot components $K_i$ for $i=1,\dots, n$. Then,  \begin{equation} s_{CL}[r] + \sum_{i=1}^n s_{CL}[K_i]\leq s_{CL}[\mu].  \label{stick2}\end{equation} The rail arc and each knot component of the 6 multi-component rail arcs in Figure \ref{fig:multistick} realizes its class' lattice stick number. Therefore, the inequality (\ref{stick2}) is sharp on these multi-component lattice rail arcs.

 Suppose further that $[L]$ is a  companion of $\mu$.  From the proof of Theorem \ref{thm:cubic}, there is a 4 stick underpass (or overpass) that turns $\mu$ into a lattice knot representative of $[L]$. Therefore, \begin{equation} s_{CL}[L]-4 \leq rs_{CL}[L]\leq s_{CL}[\mu]. \label{multirail}\end{equation}

\noindent If $[L]$ has $n$ components, each unknotted, then $4n-3\leq rs_{CL}[L]$.

\end{proof}

\begin{figure}[h]

\includegraphics[scale=.6]{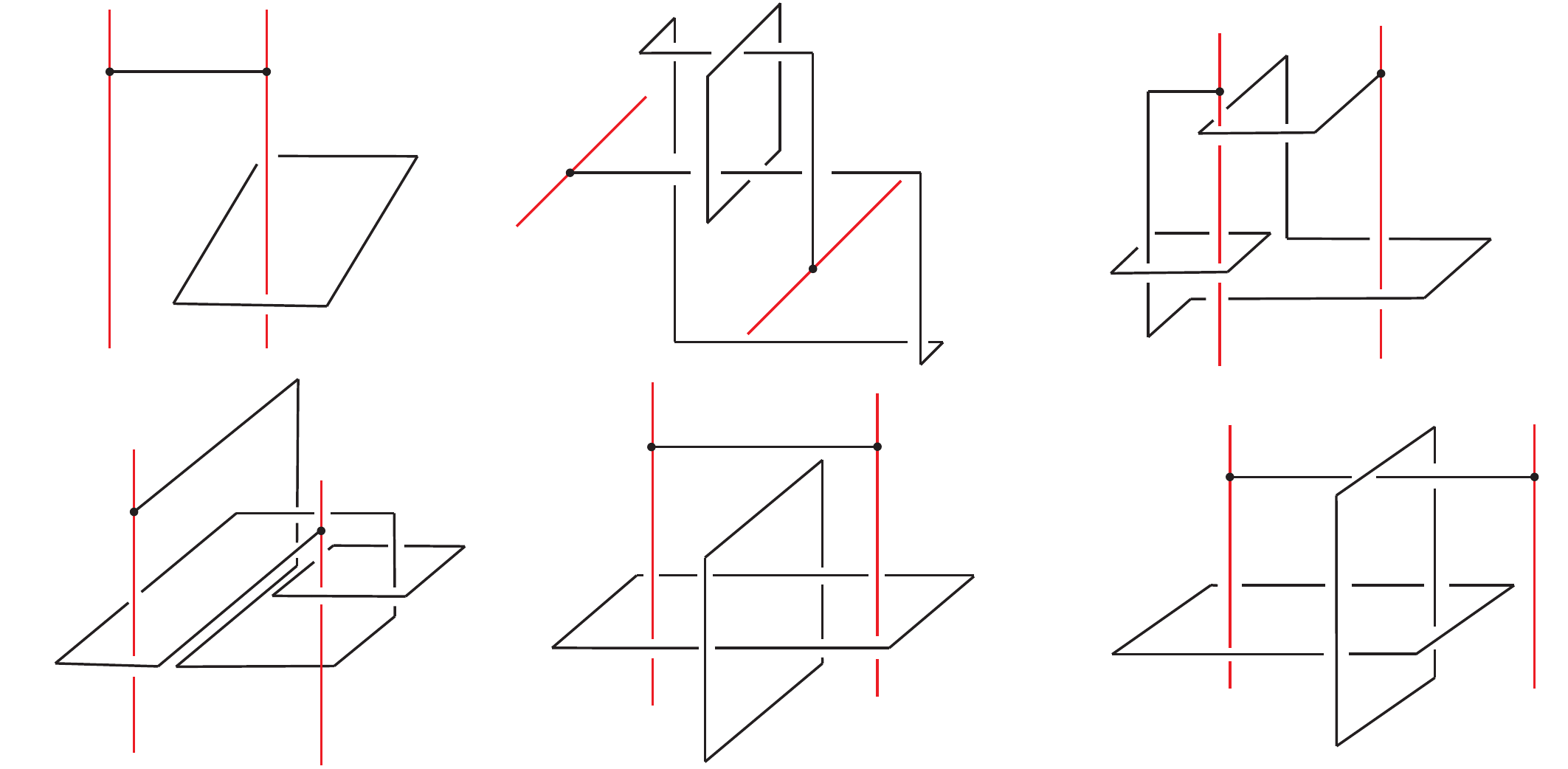}

\caption{Minimal stick representatives of multi-component lattice rail arcs, see \cite{adams}.}
\label{fig:cubiclinks}
\end{figure}

\begin{table}[h]
\setlength{\tabcolsep}{10pt} 
\renewcommand{\arraystretch}{1.5}
 
\begin{tabular}{|c||c |c |c |c| c| c |c |c |c| c| c| c| c| c| c| c| c| c|}

\hline 
$L$ & $L2a1$ & $L7n1$ & $L8n2$ & $L8a21$& $L6a4$ & $L6n1$    \\ \hline

$rs_{CL}[L]$ & 5& 12& 14& 14& 9 &9 \\\hline

$s_{CL}[L]$ & 8& 16& 18& 18& 12 &12 \\\hline

\end{tabular}
\vspace{5mm}

\caption{Rail lattice stick numbers of several links.}\label{tab:cubiclinks}
\end{table}

This paper concludes with an infinite family of lattice rail arcs with calculated lattice stick numbers, Figure \ref{fig:cubicfamily}.

\begin{figure}[h]

\includegraphics[scale=.6]{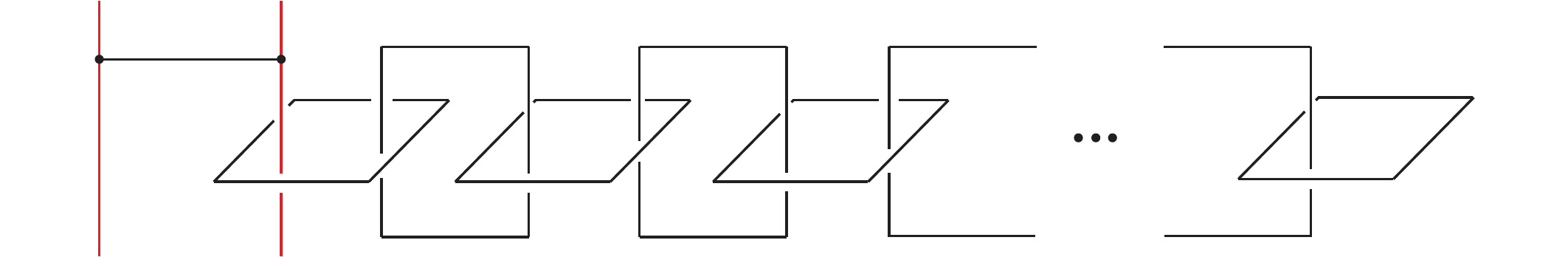}

\caption{Multi-component lattice rail arcs with $n$ unknot components and a  lattice stick number of $4n+1$.}
\label{fig:cubicfamily}
\end{figure}

\section*{Acknowledgements}

I would like to thank Clayton Shonkwiler for his correspondence and comments including Remark \ref{rmk:clayton}.

   \bibliographystyle{amsplain}
            \bibliography{railarc}

\end{document}